\documentclass[11pt]{article}
\usepackage{tikz}
%%\usetikzlibrary{matrix,arrows,decorations.pathmorphing}
%%\usepackage[doc]{optional}
%\usepackage{color}
%\usepackage{float}

%\usepackage{soul}
%\usepackage{url}
%\usepackage{graphicx}
%\definecolor{labelkey}{rgb}{0,0.08,0.45}
%\definecolor{refkey}{rgb}{0,0.6,0.0}
%\definecolor{Brown}{rgb}{0.45,0.0,0.05}
%\definecolor{lime}{rgb}{0.00,0.8,0.0}
%%\definecolor{lblue}{rgb}{0.5,0.5,0.99}
%\definecolor{lblue}{rgb}{0.8,0.85,1.00}
\usepackage{hyperref}
\usepackage{empheq}
\usepackage[shortlabels]{enumitem}
\setlist[enumerate]{nosep}
%\usepackage{paralist}
%#############
%\documentclass[11pt,leqno]{article}
%\usepackage[color]{showkeys}
%\usepackage{drftcite}
%\usepackage[doc,hhb,msm]{optional}
\usepackage[doc]{optional}
\usepackage{xcolor}
\usepackage{float}
\usepackage{soul}
\usepackage{graphicx}
\definecolor{labelkey}{rgb}{0,0.08,0.45}
\definecolor{refkey}{rgb}{0,0.6,0.0}
\definecolor{Brown}{rgb}{0.45,0.0,0.05}
\definecolor{lime}{rgb}{0.00,0.8,0.0}
\definecolor{lblue}{rgb}{0.5,0.5,0.99}

 \usepackage{mathpazo}

%\usepackage{mathptmx}

%\usepackage[T1]{fontenc}
%\usepackage[sc]{mathpazo}
%\linespread{1.05}

%\usepackage[T1]{fontenc}
%\usepackage{concmath}

%\usepackage{amsmath}

 \usepackage{stmaryrd}
\usepackage{amssymb}
\oddsidemargin -0.1cm
\textwidth  16.5cm
\topmargin  -0.1cm
\headheight 0.0cm
\textheight 21.2cm
\parindent  4mm
\parskip    10pt % was 10pt
\tolerance  3000

    \newcommand*{\stardiff}{%
      \mathrel{\vcenter{\offinterlineskip
      \hbox{$\kern+2.0pt {}^\ast$}\vskip-1.20ex\hbox{$-$}}}}

\newcommand{\sepp}{\setlength{\itemsep}{-2pt}}

\newcommand{\preceqz}{\preceq_{\mathsf{Z}}}
\newcommand{\preceql}{\preceq_{\mathsf{L}}}
\newcommand{\preceqm}{\preceq_{\mathsf{M}}}

%############
\oddsidemargin -0.1cm
\textwidth  16.5cm
\topmargin  -0.1cm
\headheight 0.0cm
\textheight 21.2cm
\parindent  4mm
\parskip    10pt % was 10pt
\tolerance  3000

\newcommand{\frechet}{Fr{\'{e}}chet}

\newcommand{\nnn}{\ensuremath{{n\in{\mathbb N}}}}
\newcommand{\thalb}{\ensuremath{\tfrac{1}{2}}}
\newcommand{\menge}[2]{\big\{{#1}~\big |~{#2}\big\}}

\newcommand{\To}{\ensuremath{\rightrightarrows}}

\newcommand{\fenv}[1]%
{\ensuremath{\,\overrightarrow{\operatorname{env}}_{#1}}}
\newcommand{\benv}[1]%
{\ensuremath{\,\overleftarrow{\operatorname{env}}_{#1}}}

\newcommand{\infconv}{\ensuremath{\mbox{\small$\,\square\,$}}}

\newcommand{\scal}[2]{\left\langle{#1},{#2}  \right\rangle}

\newcommand{\exi}{\ensuremath{\exists\,}}

\newcommand{\RR}{\ensuremath{\mathbb R}}
\newcommand{\RP}{\ensuremath{\mathbb{R}_+}}
\newcommand{\RPP}{\ensuremath{\mathbb{R}_{++}}}

\newcommand{\dom}{\ensuremath{\operatorname{dom}}}
\newcommand{\qq}{\ensuremath{\operatorname{q}}}

\newcommand{\Id}{\ensuremath{\operatorname{Id}}}

\newcommand{\pinf}{\ensuremath{+\infty}}

 % vee tiny

%
{\begin{list}{}{%
\settowidth{\labelwidth}{\textrm{#1~}}%
\setlength{\leftmargin}{\labelwidth+\labelsep}}}%requires macro calc.sty
{\end{list}}
\usepackage{amsthm}
\usepackage[capitalize]{cleveref}
%\crefname{lemma}{lemma}{lemmas}
\crefname{equation}{}{equations}
\crefname{chapter}{Appendix}{chapters}
\crefname{item}{}{items}
\newtheorem{theorem}{Theorem}[section]
\newtheorem{lemma}[theorem]{Lemma}

\newtheorem{corollary}[theorem]{Corollary}

\newtheorem{definition}[theorem]{Definition}

%[section]
%%\theoremstyle{plain}{\theorembodyfont{\rmfamily}
%\newtheorem{assumption}[theorem]{Assumption}
%%\theoremstyle{plain}{\theorembodyfont{\rmfamily}
%\newtheorem{condition}[theorem]{Condition}
%%\theoremstyle{plain}{\theorembodyfont{\rmfamily}
%\newtheorem{algorithm}[theorem]{Algorithm}
%%\theoremstyle{plain}{\theorembodyfont{\rmfamily}
\newtheorem{example}[theorem]{Example}

\newtheorem{fact}[theorem]{Fact}
\newtheorem{remark}[theorem]{Remark}

%\theoremstyle{remark}

%\def\endproof{\vbox{\hrule height0.6pt\hbox{\vrule height1.3ex%
%width0.6pt\hskip0.8ex\vrule width0.6pt}\hrule height0.6pt}}

%###########################################New commands added By Walaa ##################################

\providecommand{\nc}[1]{\operatorname{N}_{#1}}
\providecommand{\px}[1]{\operatorname{P}_{#1}}
\providecommand{\pj}[1]{\operatorname{P}_{#1}}

\providecommand{\jx}[1]{\operatorname{J}_{#1}}

\providecommand{\sd}[1]{\partial {#1}}
\providecommand{\env}[1]{\operatorname{env}({#1})}

\providecommand{\Gx}{\Gamma_0(X)}
\providecommand{\Fm}{\operatorname{\mathcal{F}}(X)} % firmly nonexpansive mappings on $X$.
\providecommand{\Pm}{\operatorname{\mathcal{P}}(X)} % firmly nonexpansive mappings on $X$.
\providecommand{\Mm}{\operatorname{\mathcal{M}}(X)} % firmly nonexpansive mappings on $X$.

\providecommand{\Ex}{\operatorname{M}_{\,0}(X)} % firmly nonexpansive mappings on $X$.

\providecommand{\norm}[1]{\lVert#1\rVert}
\providecommand{\normsq}[1]{\lVert#1\rVert^2}

\providecommand{\innp}[1]{\langle#1\rangle}

\providecommand{\RR}{\mathbb{R}}

\providecommand{\dom}{\operatorname{dom}}

\providecommand{\gr}{\operatorname{gra}}

\providecommand{\Id}{\operatorname{{ Id}}}

\providecommand{\To}{\rightrightarrows}

\providecommand{\gr}{\operatorname{gra}}

\providecommand{\Id}{\operatorname{Id}}

\providecommand{\RR}{\mathbb{R}}

\providecommand{\SR}{\mathbb{S}}

%##################################End of New commands added By Walaa ##################################
%\newcommand{\boxedeqn}[1]{%
%    \[\fbox{%
%        \addtolength{\linewidth}{-2\fboxsep}%
%        \addtolength{\linewidth}{-2\fboxrule}%
%        \begin{minipage}{\linewidth}%
%        \begin{equation}#1\\[+4mm]\end{equation}%
%        \end{minipage}%
%      }\]%
%  }
\definecolor{myblue}{rgb}{.8, .8, 1}

\allowdisplaybreaks % or locally if problems {\allowdisplaybreaks
%\begin{align} ... \end{align}}

\begin{document}

\title{\textsc
The resolvent order: a unification of the orders\\
by Zarantonello, by Loewner, and by Moreau}

\author{
Sedi Bartz\thanks{
Mathematics, University
of British Columbia,
Kelowna, B.C.\ V1V~1V7, Canada. E-mail:
\texttt{sedi.bartz@ubc.ca}.},~
Heinz H.\ Bauschke\thanks{
Mathematics, University
of British Columbia,
Kelowna, B.C.\ V1V~1V7, Canada. E-mail:
\texttt{heinz.bauschke@ubc.ca}.}
~~and Xianfu Wang\thanks{
Mathematics, University of British Columbia, Kelowna, B.C.\
V1V~1V7, Canada.
E-mail: \texttt{shawn.wang@ubc.ca}.}}

\date{June 28, 2016}

\maketitle

\begin{abstract}
\noindent
We introduce and investigate the resolvent order, which is 
a binary relation on the set of firmly nonexpansive
mappings. It unifies
well-known orders introduced by Loewner (for positive semidefinite
matrices) and by Zarantonello (for projectors onto convex
cones). 
A connection with Moreau's order of convex
functions is also presented.
We also construct partial orders on (quotient sets of) proximal
mappings and convex functions. 
Various examples illustrate our results. 
\end{abstract}
{\small
\noindent
{\bfseries 2010 Mathematics Subject Classification:}
{Primary 
06A06, %partial order
26B25, %real functions, convexity
47H05, %Monotone operators and generalizations
47H09, %Contraction-type mappings, nonexpansive mappings, $A$-proper mappings,
Secondary 
15B57, %hermitian matrices
47L07, %convex sets and cones of operators
52A41, %convex functions
90C25. %convex optimization
}

\noindent {\bfseries Keywords:}
Baillon--Haddad theorem,
convex cone, 
convex function, 
envelope,
firmly nonexpansive mapping,
Loewner order, 
L\"owner order,
maximally monotone operator,
Moreau envelope, 
Moreau's order,
positive semidefinite matrix, 
projection mapping,
projector, 
proximal mapping,
proximity operator,
resolvent,
resolvent order,
Zarantonello's order.
}

\section{Introduction}

In this paper, we assume that
%\begin{empheq}[box=\mybluebox]{equation}
\begin{equation}
\label{T:assmp}
X \text{~~is a real Hilbert space},
%\end{empheq} 
\end{equation}
with inner product $\innp{\cdot,\cdot}$ and
induced norm $\norm{\cdot}$. 
We denote the set of all functions from $X$ to $\RR\cup\{\pinf\}$
that are \emph{convex}\footnote{We assume the reader is familiar with
basic convex analysis; see, e.g., 
\cite{HU1,HU2,Rock70,Zalinescu,Rock98,BC2011}.}, lower semicontinuous and proper 
by  $\Gx$. 
Let $A\colon X\To X$ be a set-valued operator, i.e.,
$(\forall x\in X)$ $Ax\subseteq X$ and denote the graph of $A$ by
$\gr A$. 
Recall that $A$ is monotone if
\begin{equation}
(\forall (x,x^*)\in\gr A)(\forall (y,y^*)\in\gr A)\quad
\scal{x-y}{x^*-y^*}\geq 0
\end{equation}
and that $A$ is \emph{maximally monotone} if it is monotone and cannot
be extended without destroying monotonicity. 
The notion of maximal monotonicity has 
proven to be useful in modern optimization and nonlinear
analysis; see, e.g.,
\cite{BC2011,Borwein50,Brezis,BurIus,Rock98,Simons1,Simons2,Zeidler2a,Zeidler2b}. 
We denote the set of maximally monotone operators on $X$ by
$\Mm$. 
This set includes subdifferential operators of functions in $\Gx$ as
well as all square matrices with symmetric parts that are
positive semidefinite. 
Furthermore, we denote by $\Fm$ the set of all
mappings $T\colon X\to X$ that are 
\emph{firmly nonexpansive}\footnote{
Note that by the Cauchy--Schwarz inequality,
every firmly nonexpansive mapping is \emph{nonexpansive}, i.e.,
Lipschitz continuous with constant $1$.}, i.e., 
\begin{equation}
(\forall x\in X)(\forall y\in X)\quad
\|Tx-Ty\|^2 \leq \scal{x-y}{Tx-Ty}.
\end{equation}
Thanks to the work of Minty \cite{Minty} (see also \cite{EckBer}), 
we can identify a maximally monotone operator $A$ from $\Mm$ with
with its \emph{resolvent} in $\Fm$ via
\begin{equation}
\jx{A} := (\Id+A)^{-1}.
\end{equation}
Here $\Id=\nabla \qq = \partial \qq$ is the identity operator on
$X$, where $\qq \colon x\mapsto \tfrac{1}{2}\|x\|^2$. 
If we focus instead on the important subset of subdifferential
operators in $\Mm$, then we recover Moreau's \cite{Moreau} 
\emph{proximal mapping} (or proximity operator)
\begin{equation}
\px{f} := \jx{\sd{f}} = (\Id+\sd{f})^{-1},
\end{equation}
where $f\in\Gx$ and 
$\sd{f}\in\Mm$ is the subdifferential operator of $f$. 
The set of proximal mappings, which we write as $\Pm$,
can also be described as follows.
Given $f\in\Gx$, let $\env{f} := \qq\infconv f$ be the
\emph{(Moreau) envelope} of $f$, where $\infconv$ denotes infimal
convolution. The set of all envelopes is written as $\Ex$. 
Then 
\begin{equation}
\nabla \env{f^*} = \px{f} = \jx{\sd{f}}=(\Id+\sd{f})^{-1},
\end{equation}
where $f^*\in\Gx$ is the Fenchel conjugate of $f$. 
(Thus, we can loosely write $\Ex = \Gx \infconv \qq$ and $\nabla
\Ex = \Pm$.)

Having set up the necessary notation, we can now describe the
goal and the organization of this paper.

\emph{The goal of this paper is to introduce a new
order\footnote{To keep the language in this paper from being overly
technical, we will refer to an ``order'' as a binary relation that is
at least reflexive.} on $\Fm$ which we
call the \emph{resolvent order}.
It induces orders on $\Pm$, $\Mm$, $\Gx$, and $\Ex$ which will
allow us to 
unify and connect to
several well known orders from linear and nonlinear analysis,
namely to the orders by Zarantonello, by Loewner, and by Moreau.
We provide several examples and also present a partial order 
on (a quotient set of) the set of proximal mappings $\Pm$ and on
(a quotient set of) the set of convex functions $\Gx$. 
}

The remainder of the paper is organized as follows.
In Section~\ref{s:aux}, we present various results that
make the proofs of the main results more structured. 
The resolvent order on $\Fm$ is defined in Section~\ref{s:main},
where we also provide basic properties and characterizations 
for $\Pm$. In fact, transitivity of the order is established for
$\Pm$. 
In Section~\ref{s:weird}, we discuss partitions of the identity
and show that transitivity fails for
$\Fm$. 
In Sections~\ref{s:Z}, \ref{s:L}, and \ref{s:M}, we connect the
resolvent order to the orders by
Zarantonello, by Loewner, and by Moreau, respectively. 
New orders on $\Mm$ and $\Gx$ are introduced in
Section~\ref{s:no}. 
These orders are not \emph{partial orders}.
A quotient construction is presented in 
Section~\ref{s:last} which results in partial orders on $\Pm$ and
on $\Gx$. 

The notation we employ is standard and follows, e.g.,
\cite{BC2011}. 

\section{Auxiliary results}

\label{s:aux}

In this section, we collect various results that will be useful
later. 

\begin{fact}
\label{f:fne}
Let $T\colon X\to X$. Then the following are equivalent:
\begin{enumerate}
\item
$T$ is \emph{firmly nonexpansive}, i.e.,
$(\forall x\in X)(y\in X)$
$\scal{Tx-Ty}{x-y}\geq\|Tx-Ty\|^2$.
\item $(\forall x\in X)(y\in X)$
$\|Tx-Ty\|^2 + \|(x-Tx)-(y-Ty)\|^2\leq\|x-y\|^2$.
\item 
$\Id-T$ is firmly nonexpansive.
\item
$2T-\Id$ is nonexpansive. 
\end{enumerate}
\end{fact}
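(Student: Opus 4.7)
The plan is to reduce all four statements to a single scalar inequality by introducing the shorthand $u:=x-y$ and $v:=Tx-Ty$ for arbitrary $x,y\in X$. With this notation, condition (i) reads $\scal{u}{v}\ge\|v\|^2$ for all $x,y$, and I would aim to show that each of (ii), (iii), (iv) reduces to exactly the same inequality. Since the substitution is symmetric in the roles of $x$ and $y$ and sweeps over all of $X\times X$, such a reduction gives all equivalences at once without having to arrange a cycle of implications.

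Concretely, I would handle each case by a short algebraic expansion. For (iv), squaring the nonexpansiveness inequality $\|(2T-\Id)x-(2T-\Id)y\|^2\le\|x-y\|^2$ becomes $\|2v-u\|^2\le\|u\|^2$; expanding and cancelling $\|u\|^2$ yields $4\|v\|^2\le 4\scal{u}{v}$, which is (i). For (ii), the left-hand side expands via the identity $\|u-v\|^2=\|u\|^2-2\scal{u}{v}+\|v\|^2$, and the inequality collapses to $2\|v\|^2\le 2\scal{u}{v}$. For (iii), I observe that $(\Id-T)x-(\Id-T)y=u-v$, so firm nonexpansiveness of $\Id-T$ is $\scal{u-v}{u}\ge\|u-v\|^2$; subtracting $\|u-v\|^2$ from both sides and factoring gives $\scal{u-v}{v}\ge 0$, which is equivalent to (i). All four statements thus collapse to the single inequality $\scal{u}{v}\ge\|v\|^2$ quantified over all $u,v$ arising from points of $X$.

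There is no real obstacle here: the proof is purely an exercise in expanding inner-product squares. The only thing worth flagging is that the three equivalent reformulations each emphasize a different structural feature (the ``cocoercivity'' form (i), the Pythagorean-style inequality (ii) which reveals that $T$ and $\Id-T$ are paired, the symmetric form (iii), and the reflected-nonexpansive form (iv) which is the bridge to averaged mappings), and so the order in which one lists them is a matter of taste rather than logic. I would simply present the reduction to $\scal{u}{v}\ge\|v\|^2$ in each case and conclude.
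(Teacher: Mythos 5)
Your argument is correct: with $u:=x-y$ and $v:=Tx-Ty$, each of (ii), (iii), (iv) does reduce by direct expansion to the single inequality $\scal{u}{v}\geq\normsq{v}$, and each reduction you sketch checks out (for (iii), note $\scal{u-v}{u}-\normsq{u-v}=\scal{u-v}{v}=\scal{u}{v}-\normsq{v}$, exactly as you say). The paper itself offers no proof of this Fact --- it simply cites the literature (Bauschke--Combettes, Goebel--Kirk, Goebel--Reich) --- so your self-contained verification is a welcome addition rather than a deviation; it is in fact the standard argument given in those references (cf.\ Proposition~4.2 of Bauschke--Combettes). Your strategic observation that all four conditions collapse to one scalar inequality, so that no cycle of implications is needed, is exactly the right way to organize the proof.
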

\begin{proof}
See, e.g., \cite{BC2011}, \cite{GK}, or \cite{GR}.
\end{proof}

\begin{corollary}
\label{c:bibi}
The sets $\Fm$ and $\Pm$ are convex. 
If $\lambda\in[0,1]$, then
$\lambda\Fm\subseteq\Fm$ and $\lambda\Pm\subseteq\Pm$.
\end{corollary}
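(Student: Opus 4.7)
The plan is to verify convexity of $\Fm$ directly from the defining inequality, then transfer the argument to $\Pm$ by combining firm nonexpansiveness with the fact that proximal mappings are exactly the gradients of convex functions (Moreau's characterization), and finally to deduce the shrinkage statement $\lambda\Fm\subseteq\Fm$ (and similarly for $\Pm$) by observing that the zero mapping lies in both sets and applying convexity.

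First, I would handle $\Fm$. Fix $T_1,T_2\in\Fm$ and $\alpha\in[0,1]$, and set $T:=\alpha T_1+(1-\alpha)T_2$. For all $x,y\in X$, Jensen's inequality applied to the convex function $\|\cdot\|^2$ gives
\begin{equation*}
\|Tx-Ty\|^2 \;\le\; \alpha\|T_1x-T_1y\|^2+(1-\alpha)\|T_2x-T_2y\|^2.
\end{equation*}
Applying the defining inequality of firm nonexpansiveness to each summand and using the linearity of $\scal{x-y}{\cdot}$ in the second entry, the right side is bounded by $\scal{x-y}{Tx-Ty}$; hence $T\in\Fm$. Since the zero operator is trivially firmly nonexpansive, taking $T_2=0$ in the computation above shows $\lambda T\in\Fm$ for every $T\in\Fm$ and $\lambda\in[0,1]$.

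Next I turn to $\Pm$. The key extra ingredient is Moreau's characterization: $T\in\Pm$ precisely when $T$ is firmly nonexpansive and $T=\nabla\phi$ for some convex function $\phi$ on $X$ (this is the content behind the identity $\px{f}=\nabla\env{f^*}$ recalled in the introduction). Given $T_1=\nabla\phi_1$ and $T_2=\nabla\phi_2$ in $\Pm$ and $\alpha\in[0,1]$, the convex combination $\alpha T_1+(1-\alpha)T_2=\nabla\bigl(\alpha\phi_1+(1-\alpha)\phi_2\bigr)$ is again the gradient of a convex function, and it lies in $\Fm$ by the previous paragraph; Moreau's characterization then places it in $\Pm$. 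Finally, $0=\px{\iota_{\{0\}}}\in\Pm$, so the convexity just established yields $\lambda T=\lambda T+(1-\lambda)\cdot 0\in\Pm$ for $\lambda\in[0,1]$.

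The only genuinely non-routine point is the appeal to Moreau's characterization in the $\Pm$ step; without it, one could only conclude that the convex combination is firmly nonexpansive, not that it is itself a proximal mapping. Everything else is an application of convexity of $\|\cdot\|^2$ and the linearity of the inner product, so I do not expect any technical obstacle beyond citing this classical identification.
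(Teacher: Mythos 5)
Your proof is correct, but it fills in the details along a slightly different route than the paper, whose proof is essentially two citations plus one remark. For the convexity of $\Fm$, the paper invokes Fact~\ref{f:fne}: $T$ is firmly nonexpansive iff $2T-\Id$ is nonexpansive, and a convex combination of nonexpansive maps is nonexpansive by the triangle inequality; you instead verify the defining inequality directly via Jensen's inequality for $\|\cdot\|^2$ together with bilinearity of the inner product. Both are one-line arguments and equally valid. For the convexity of $\Pm$, the paper simply cites Moreau, whereas you unpack that citation: you use the characterization of proximal mappings as nonexpansive gradients of convex functions (the content of Fact~\ref{f:BH}, which in the paper's ordering appears only \emph{after} this corollary) and note that $\alpha\nabla\phi_1+(1-\alpha)\nabla\phi_2=\nabla(\alpha\phi_1+(1-\alpha)\phi_2)$ is again a firmly nonexpansive gradient of a convex function. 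This is exactly the mechanism behind the paper's later Corollary~\ref{c:160618b}, so your argument is self-contained at the cost of front-loading the Baillon--Haddad/Moreau machinery. The treatment of the inclusions $\lambda\Fm\subseteq\Fm$ and $\lambda\Pm\subseteq\Pm$ via $0=\px{\iota_{\{0\}}}\in\Pm\subseteq\Fm$ and convexity is identical to the paper's. No gaps.
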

\begin{proof}
For $\Fm$, the convexity follows using the last item from Fact~\ref{f:fne}
(see also \cite[Corollary~1.8]{BBMW}). 
For the convexity of $\Pm$, see \cite{Moreau}. 
To obtain the inclusions, it suffices to note that $0\in\Pm$. 
\end{proof}

\begin{lemma}
\label{l:160618a}
Let $T_1$ and $T_2$ be firmly nonexpansive on $X$.
Then $T_2-T_1$ is nonexpansive.
\end{lemma}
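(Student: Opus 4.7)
The plan is to exhibit $T_2-T_1$ as $2S-\Id$ for a suitable firmly nonexpansive $S$, and then invoke Fact~\ref{f:fne}(iv), which says that $T$ is firmly nonexpansive iff $2T-\Id$ is nonexpansive. Specifically, I would set
\begin{equation*}
S := \tfrac{1}{2}\bigl(T_2 + (\Id - T_1)\bigr).
\end{equation*}
By Fact~\ref{f:fne}(iii), $\Id - T_1$ is firmly nonexpansive, and $T_2$ is firmly nonexpansive by hypothesis. Corollary~\ref{c:bibi} then gives that the midpoint $S$ is again firmly nonexpansive, since $\Fm$ is convex.

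Next I would compute
\begin{equation*}
2S - \Id \;=\; T_2 + (\Id - T_1) - \Id \;=\; T_2 - T_1,
\end{equation*}
and apply Fact~\ref{f:fne}(iv) to $S$ to conclude that $T_2-T_1=2S-\Id$ is nonexpansive.

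There is no serious obstacle; the whole argument is a two-line bookkeeping exercise once one notices the right decomposition. The only ``insight'' is recognizing that the reflected-resolvent characterization (Fact~\ref{f:fne}(iv)) together with the duality $T\leftrightarrow \Id-T$ (Fact~\ref{f:fne}(iii)) turns the difference $T_2-T_1$ into exactly twice a convex combination of firmly nonexpansive mappings minus the identity. An alternative, more pedestrian, route would be to expand $\|(T_2-T_1)x-(T_2-T_1)y\|^2$ and use the two defining firm nonexpansivity inequalities directly, but the approach above is cleaner and uses only tools already stated in the excerpt.
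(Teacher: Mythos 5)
Your proof is correct. It differs slightly from the paper's own argument, which uses the representation $T_i=(\Id+N_i)/2$ with $N_i:=2T_i-\Id$ nonexpansive (Fact~\ref{f:fne}) and then observes that $T_2-T_1=(N_2-N_1)/2$ is nonexpansive --- the last step being a one-line triangle-inequality estimate that the paper leaves implicit. You instead form the midpoint $S=\tfrac12\bigl(T_2+(\Id-T_1)\bigr)$, which lies in $\Fm$ by Fact~\ref{f:fne}(iii) and the convexity of $\Fm$ (Corollary~\ref{c:bibi}), and then read off nonexpansiveness of $T_2-T_1=2S-\Id$ directly from Fact~\ref{f:fne}(iv). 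The two arguments pivot on the same $2T-\Id$ characterization, but yours replaces the explicit estimate on a difference of nonexpansive maps by an appeal to convexity of $\Fm$, so every step is a citation of an already-stated result; the paper's version is marginally more self-contained in that it does not invoke Corollary~\ref{c:bibi}, at the cost of the small unproved (though trivial) claim that half the difference of two nonexpansive mappings is nonexpansive. Either way the lemma is established.
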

\begin{proof}
By Fact~\ref{f:fne},
we can write each 
$T_i = (\Id+N_i)/2$, where $N_i$ is nonexpansive.
It follows that $T_2-T_1 = (N_2-N_1)/2$ is nonexpansive.
\end{proof}

\begin{fact}
\label{f:BH}
Let $f\in\Gx$ and set $h := f^*-\qq$.
Then the following are equivalent:
\begin{enumerate}
\item
\label{f:BHi}
$f$ is \frechet\ differentiable on $X$ and $\nabla f$ is
nonexpansive.
\item
\label{f:BHii}
$f$ is \frechet\ differentiable on $X$ and $\nabla f$ is
firmly nonexpansive.
\item
\label{f:BHiii}
$\qq-f$ is convex
\item
\label{f:BHiv}
$f^*-\qq$ is convex. 
\item
\label{f:BHv}
$h\in\Gx$ and $f=\env{h^*}$.
\item
\label{f:BHvi}
$h\in\Gx$ and $\nabla f = \px{h}$. 
\end{enumerate}
\end{fact}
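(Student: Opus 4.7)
The plan is to prove the six-way equivalence by establishing two sub-cycles joined at \ref{f:BHii}: the Baillon--Haddad cycle (i) $\Rightarrow$ (iii) $\Rightarrow$ (ii) $\Rightarrow$ (i), and the conjugate-duality cycle (ii) $\Rightarrow$ (iv) $\Rightarrow$ (v) $\Rightarrow$ (vi) $\Rightarrow$ (ii). Almost every link is formal; the substantive step is the Baillon--Haddad-type implication (iii) $\Rightarrow$ (ii), which I would address last.

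I would begin with the easy links. (ii) $\Rightarrow$ (i) is Cauchy--Schwarz applied to the firm-nonexpansiveness inequality. For (i) $\Rightarrow$ (iii), note that $\nabla(\qq - f) = \Id - \nabla f$ is monotone by Cauchy--Schwarz plus $1$-Lipschitzness, so $\qq - f$ is convex. For (ii) $\Rightarrow$ (iv), firm nonexpansiveness of $\nabla f$ reads, via $\partial f^* = (\nabla f)^{-1}$, as $1$-strong monotonicity of $\partial f^*$, equivalently strong convexity of $f^*$ with modulus $1$, equivalently convexity of $h := f^* - \qq$. For (iv) $\Rightarrow$ (v), $h$ belongs to $\Gx$ (lower semicontinuous as a difference of an lsc and a continuous finite-valued function), and taking Fenchel conjugates of $f^* = h + \qq$ yields $f = f^{**} = (h + \qq)^* = h^* \infconv \qq^* = h^* \infconv \qq = \env{h^*}$, the middle step being the conjugate-of-sum formula (valid since $\qq$ is finite and continuous on $X$). (v) $\Rightarrow$ (vi) is immediate from the identity $\nabla \env{h^*} = \px{h}$ already recorded in the introduction. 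Finally, (vi) $\Rightarrow$ (ii) holds because $\px{h} \in \Pm \subseteq \Fm$ is defined on all of $X$, so $\nabla f = \px{h}$ is firmly nonexpansive and $f$ is \frechet\ differentiable on $X$.

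For the hard link (iii) $\Rightarrow$ (ii), I would first observe that convexity of $\qq - f$ forces $\dom f = X$, so the Moreau--Rockafellar sum rule applied to $\qq = f + (\qq - f)$ yields $\{x\} = \partial \qq(x) = \partial f(x) + \partial (\qq - f)(x)$ for every $x \in X$. Since a Minkowski sum of two nonempty closed convex sets can be a singleton only when each summand is, $f$ is G\^{a}teaux differentiable; write $\nabla f(x)$ for the unique element of $\partial f(x)$, so $\partial(\qq - f)(x) = \{x - \nabla f(x)\}$. The subgradient inequality for the convex $\qq - f$ then rearranges to the descent inequality
\begin{equation*}
f(y) \leq f(x) + \innp{\nabla f(x), y - x} + \qq(y - x) \qquad \text{for all } x, y \in X.
\end{equation*}
Next I would apply the classical Baillon--Haddad trick: fix $y \in X$, set $y^* := \nabla f(y)$, and consider $\phi := f - \innp{y^*, \cdot}$. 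Then $\phi$ satisfies the same descent inequality (with gradient $\nabla \phi = \nabla f - y^*$) and has $y$ as a global minimizer since $\nabla \phi(y) = 0$. Evaluating the descent inequality at $u := z - \nabla \phi(z)$ yields $\phi(z - \nabla \phi(z)) \leq \phi(z) - \qq(\nabla \phi(z))$; combining with $\phi(y) \leq \phi(z - \nabla \phi(z))$ and unwinding gives $f(y) \leq f(z) + \innp{y^*, y - z} - \tfrac{1}{2}\|\nabla f(z) - y^*\|^2$. Symmetrizing in $y$ and $z$ and adding produces $\innp{\nabla f(z) - \nabla f(y), z - y} \geq \|\nabla f(z) - \nabla f(y)\|^2$, which is firm nonexpansiveness. \frechet\ differentiability then upgrades from G\^{a}teaux via continuity of the now-firmly-nonexpansive gradient.

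The main obstacle will be precisely this last step: differentiability of $f$ must be extracted from convexity of $\qq - f$ via the subdifferential sum rule before the classical descent-lemma argument can be applied. All other implications reduce to Cauchy--Schwarz, basic conjugate duality, or the already-recorded identity $\nabla \env{h^*} = \px{h}$.
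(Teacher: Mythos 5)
The paper offers no argument for this Fact beyond the citation to \cite[Theorem~18.17]{BC2011} (and to \cite{BH}, \cite{BC2010}, \cite{Moreau}), so there is no in-paper proof to compare against; your reconstruction is correct and is essentially the standard proof found in those references. The decomposition into the cycle (i)$\Rightarrow$(iii)$\Rightarrow$(ii)$\Rightarrow$(i) plus the conjugate-duality cycle (ii)$\Rightarrow$(iv)$\Rightarrow$(v)$\Rightarrow$(vi)$\Rightarrow$(ii) is sound, and each link checks out. Two steps would need to be spelled out in a full write-up: first, that convexity of $\qq-f$ really forces $\dom f=X$ (if $f(x_0)=\pinf$ while $f(x_1)\in\RR$, apply the midpoint convexity inequality to $g:=\qq-f$ at $x_1=\tfrac{1}{2}x_0+\tfrac{1}{2}(2x_1-x_0)$, noting $g$ never takes the value $\pinf$, to get $g(x_1)=\minf$ and contradict properness of $f$); this is what guarantees continuity of $f$ and nonemptiness of both subdifferentials before the Moreau--Rockafellar sum rule is invoked. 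Second, the direction ``$\partial f^*$ is $1$-strongly monotone $\Rightarrow$ $f^*-\qq$ is convex'' used in your link (ii)$\Rightarrow$(iv) is itself a standard but not one-line equivalence and deserves its own citation or proof. A small simplification is available at the end: once you have the descent inequality $f(y)\le f(x)+\innp{\nabla f(x),y-x}+\qq(y-x)$ together with the subgradient inequality $f(y)\ge f(x)+\innp{\nabla f(x),y-x}$, \frechet\ differentiability follows immediately from the resulting $O(\norm{y-x}^2)$ sandwich, so no separate G\^{a}teaux-to-\frechet\ upgrade via continuity of the gradient is needed.
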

\begin{proof}
See \cite[Theorem~18.17]{BC2011}, 
and also \cite{BH}, \cite{BC2010}, 
\cite{Moreau}.
\end{proof}

\begin{corollary}
\label{c:160618b}
Any linear combination of proximal mappings that is monotone and
nonexpansive is actually a proximal mapping.
\end{corollary}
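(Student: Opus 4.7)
The plan is to exploit the bijection between proximal mappings and gradients of envelopes provided by Fact~\ref{f:BH}, using the hypothesis of monotonicity to recover convexity of a primitive.

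First I would fix a linear combination $T = \sum_{i=1}^{n}\lambda_i \px{f_i}$ with $f_i\in\Gx$ and $\lambda_i\in\RR$, and assume that $T$ is monotone and nonexpansive. By Fact~\ref{f:BH}\eqref{f:BHv}--\eqref{f:BHvi} applied to each summand, we may write $\px{f_i} = \nabla e_i$, where $e_i := \env{f_i^*}$. Each $e_i$ is finite-valued on $X$, belongs to $\Gx$, and is Fr\'echet differentiable on $X$ with nonexpansive gradient. Setting
\begin{equation*}
g := \sum_{i=1}^{n}\lambda_i\, e_i,
\end{equation*}
we obtain a finite-valued Fr\'echet differentiable function $g\colon X\to\RR$ whose gradient is precisely $T$.

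Next I would upgrade $g$ to a member of $\Gx$. The hypothesis that $T=\nabla g$ is monotone, combined with the classical fact that a Fr\'echet differentiable function on a Hilbert space is convex if and only if its gradient is monotone, yields that $g$ is convex. Since $g$ is everywhere defined and continuous, $g\in\Gx$. Moreover, $\nabla g = T$ is nonexpansive by hypothesis.

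Finally, I would invoke Fact~\ref{f:BH} in the reverse direction: applying \eqref{f:BHi}$\Rightarrow$\eqref{f:BHvi} to $g$, we conclude that there exists $h\in\Gx$ (namely $h=(g^*-\qq)$ when it lies in $\Gx$) with $T = \nabla g = \px{h}\in\Pm$, as desired. The only non-routine step is the passage from monotonicity of $\nabla g$ to convexity of $g$; here this is immediate because $g$ is already known to be $C^1$ and finite-valued (a direct consequence of the Baillon--Haddad circle of equivalences summarized in Fact~\ref{f:BH}), so the standard characterization applies without qualification.
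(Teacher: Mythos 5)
Your proposal is correct and follows essentially the same route as the paper: form the sum of envelopes $\env{f_i^*}$, observe that its gradient is the given linear combination, deduce convexity from monotonicity of the gradient, and then apply Fact~\ref{f:BH} to conclude the gradient is a proximal mapping. You are merely more explicit than the paper about the step ``monotone gradient of a finite-valued $C^1$ function implies convexity,'' which is a welcome clarification rather than a deviation.
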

\begin{proof}
Let $h_1,\ldots,h_n$ be in $\Gx$ such that
$P := \sum_{i=1}^n \alpha_i\px{h_i}$ is nonexpansive and monotone. 
Set $(\forall i\in\{1,\ldots,n\})$ $f_i := \env{h_i^*} = h_i^*\infconv\qq =
(h_i+\qq)^*$,
$f:=\sum_{i=1}^n\alpha_i f_i$ and $h := f^*-\qq$.
Then $P=\nabla f$ and $f$ is thus convex and \frechet\
differentiable. 
By Fact~\ref{f:BH}, $h\in\Gx$ and $P = \nabla f = \px{h}$.
\end{proof}

\begin{corollary}
\label{c:magda}
Let $P_1$ and $P_2$ be proximal mappings.
Then $P_1-P_2$ is a proximal mapping if and only if 
$P_1-P_2$ is monotone.
\end{corollary}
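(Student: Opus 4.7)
The plan is to handle the two directions separately, with the forward direction being essentially immediate and the reverse direction being a direct consequence of the two results stated just before.

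For the ``only if'' direction, I would simply recall that every proximal mapping lies in $\Fm$ (by its very definition as $\jx{\sd{f}}$), and every firmly nonexpansive mapping is monotone (indeed, the defining inequality $\|Tx-Ty\|^2\leq\scal{x-y}{Tx-Ty}$ forces $\scal{x-y}{Tx-Ty}\geq 0$). Hence if $P_1-P_2$ happens to be a proximal mapping, it is in particular monotone.

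For the ``if'' direction, the strategy is to view $P_1-P_2$ as a linear combination of proximal mappings, namely $\alpha_1 P_1 + \alpha_2 P_2$ with $\alpha_1=1$ and $\alpha_2=-1$, and then invoke Corollary~\ref{c:160618b}. That corollary requires two hypotheses: monotonicity and nonexpansiveness. Monotonicity is given. For nonexpansiveness, I would apply Lemma~\ref{l:160618a} to $T_1:=P_2$ and $T_2:=P_1$, which are firmly nonexpansive as elements of $\Pm\subseteq\Fm$; the lemma then yields directly that $P_1-P_2$ is nonexpansive. With both hypotheses of Corollary~\ref{c:160618b} verified, we conclude that $P_1-P_2\in\Pm$.

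There is no real obstacle here; both directions follow cleanly from the already-assembled machinery. The only subtlety worth flagging is that Corollary~\ref{c:160618b} is stated for linear combinations with unspecified coefficients, and one should check that negative coefficients are indeed allowed in its proof — inspecting that proof shows it only uses the convexity and \frechet\ differentiability of $f=\sum_i\alpha_i f_i$, which are preserved under arbitrary real linear combinations provided the resulting $f$ is finite-valued (which it is, since each $f_i=\env{h_i^*}$ is finite everywhere on $X$). Thus applying it with $\alpha_1=1$, $\alpha_2=-1$ is legitimate and completes the argument.
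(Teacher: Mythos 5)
Your proposal is correct and is essentially the paper's own argument: the forward direction is immediate since proximal mappings are firmly nonexpansive and hence monotone, and the reverse direction combines Lemma~\ref{l:160618a} (for nonexpansiveness) with Corollary~\ref{c:160618b} (applied to the combination $1\cdot P_1+(-1)\cdot P_2$). Your extra remark that the proof of Corollary~\ref{c:160618b} genuinely tolerates negative coefficients is a worthwhile sanity check, but it does not change the route.
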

\begin{proof}
``$\Rightarrow$'': Clear.
``$\Leftarrow$'': By Lemma~\ref{l:160618a}, $P_1-P_2$ is
nonexpansive. Now apply Corollary~\ref{c:160618b}.
\end{proof}

\begin{fact}
\label{f:BHlin}
Let $T\colon X\to X$ be linear and self-adjoint.
Then $T$ is firmly nonexpansive
if and only if $T$ is monotone and nonexpansive,
in which case $T$ is a proximal mapping
\end{fact}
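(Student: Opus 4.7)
The plan is to handle the three assertions in order: the easy direction that firm nonexpansiveness forces monotonicity and nonexpansiveness, the converse under linearity and self-adjointness, and finally the claim that such a $T$ is a proximal mapping.

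The forward implication is routine: setting $y=0$ in the defining firm-nonexpansiveness inequality and using $T0=0$ (from linearity) gives $\scal{x}{Tx}\geq\|Tx\|^2\geq 0$, which is monotonicity, while nonexpansiveness is the Cauchy--Schwarz consequence already recorded in the paper's footnote on $\Fm$.

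For the converse, my plan is to reinterpret the target inequality as an operator inequality on $T-T^{2}$. Self-adjointness gives $\|Tx\|^{2}=\scal{Tx}{Tx}=\scal{T^{2}x}{x}$, so firm nonexpansiveness becomes $\scal{(T-T^{2})x}{x}\geq 0$ for every $x\in X$. Now $T$ is positive and self-adjoint (that is precisely monotone plus self-adjoint), and $\Id-T$ is also positive and self-adjoint, because for a bounded self-adjoint operator the norm coincides with the numerical radius, so $\|T\|\leq 1$ yields $\scal{Tx}{x}\leq\|x\|^{2}$. Invoking the continuous functional calculus, the positive square root $T^{1/2}$ commutes with $\Id-T$, hence
\[
T(\Id-T)=T^{1/2}(\Id-T)T^{1/2},
\]
which is manifestly positive semidefinite upon testing against $x$ and pushing $T^{1/2}$ to the other side of the inner product. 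This delivers $\scal{(T-T^{2})x}{x}\geq 0$ and closes the equivalence.

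To conclude that $T$ is a proximal mapping, I would set $f(x):=\tfrac{1}{2}\scal{Tx}{x}$. Monotonicity of $T$ makes $f$ convex, and continuity makes it lower semicontinuous and proper, so $f\in\Gx$; self-adjointness gives $\nabla f=T$, and $\nabla f$ is firmly nonexpansive by the part just proved. Fact~\ref{f:BH} then produces $h\in\Gx$ with $T=\nabla f=\px{h}$, as required. The main obstacle is the converse direction, where one has to recognize that the scalar-looking inequality $\|Tx\|^{2}\leq\scal{x}{Tx}$ should be promoted to the operator inequality $T^{2}\preceq T$ and then handled via the square-root/functional-calculus argument; in finite dimensions one could instead simply diagonalize $T$ and reduce to the scalar fact that $0\leq t\leq 1$ implies $t^{2}\leq t$.
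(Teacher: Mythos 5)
Your proposal is correct. Note that the paper does not actually prove this fact; it simply cites \cite[Corollary~18.15]{BC2011} and \cite{BH}, i.e., the linear case of the Baillon--Haddad theorem. Your argument is therefore a genuine self-contained alternative, and each of its three parts holds up: the forward direction via $y=0$ and Cauchy--Schwarz is standard; the converse via the operator inequality $T^2\preceq T$, obtained by writing $T(\Id-T)=T^{1/2}(\Id-T)T^{1/2}$ with the positive square root from the functional calculus, is a classical and valid spectral-theoretic proof (and, as you say, reduces to diagonalization in finite dimensions); and the proximal-mapping claim via $f:=\tfrac{1}{2}\scal{T\cdot}{\cdot}$ and Fact~\ref{f:BH} is exactly how the cited reference proceeds. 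The one observation worth making is that your square-root argument, while correct, is redundant given the tools already assembled: once you introduce $f=\tfrac{1}{2}\scal{T\cdot}{\cdot}$, which is convex (by monotonicity of $T$), everywhere \frechet\ differentiable with $\nabla f = T$ (by self-adjointness), and has $\nabla f$ nonexpansive by hypothesis, the implication \ref{f:BHi}$\Rightarrow$\ref{f:BHii} of Fact~\ref{f:BH} already delivers firm nonexpansiveness of $T$, and \ref{f:BHvi} simultaneously delivers $T=\px{h}$ with $h=f^*-\qq$. So the whole converse and the proximal-mapping assertion follow in one stroke from Fact~\ref{f:BH}; your functional-calculus route buys independence from the Baillon--Haddad machinery at the cost of invoking spectral theory, whereas the route through Fact~\ref{f:BH} is shorter within the framework of this paper. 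A minor stylistic point: you invoke the equality of norm and numerical radius for self-adjoint operators to get $\Id-T\succeq 0$, but plain Cauchy--Schwarz, $\scal{Tx}{x}\le\|Tx\|\,\|x\|\le\|x\|^2$, already suffices there.
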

\begin{proof}
See \cite[Corollary~18.15]{BC2011} and also \cite{BH}. 
\end{proof}

\section{The resolvent order}

\label{s:main}

From the point of view of monotone operator theory,
the set of firmly nonexpansive mappings is the same as the
set of resolvents. This motivates the language in the following
definition.

\begin{definition}
{\rm \textbf{(resolvent order)}}
\label{d:ourorder}
We define on $\Fm$ a binary relation via 
\begin{equation}
% (\forall T_1\in \Fm)(\forall T_2\in\Fm)\quad 
T_1\preceq T_2
\;:\Leftrightarrow\;
T_2-T_1\in\Fm.
\end{equation}
%We also write $T_2\succeq T_1$ instead of $T_1\preceq T_2$.
\end{definition}

Let us collect some basic properties.

\begin{lemma}
\label{l:160618b}
Let $T,T_0,T_1$ be in $\Fm$.
The binary relation $\preceq$ satisfies the following:
\begin{enumerate}
\item
\label{l:160618bi}
{\rm \textbf{(reflexivity)}}
$T\preceq T$.
\item
\label{l:160618bii}
{\rm \textbf{(existence of least and greatest element)}}
$0\preceq T\preceq \Id$.
\item
\label{l:160618biii}
{\rm \textbf{(order reversal)}}
$T_0\preceq T_1$
$\Leftrightarrow$
$\Id-T_1\preceq \Id-T_0$. 
\item
\label{l:160618biv}
$T_0\preceq T_1$
$\Leftrightarrow$
$(\forall \lambda\in[0,1])$
$T_0 \preceq (1-\lambda)T_0+\lambda T_1\preceq T_1$.
\end{enumerate}
\end{lemma}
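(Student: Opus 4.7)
The four items are all formal manipulations with the definition $T_1 \preceq T_2 \Leftrightarrow T_2 - T_1 \in \Fm$ combined with Fact~\ref{f:fne} and Corollary~\ref{c:bibi}. My plan is to dispatch them in order, each one taking only a few lines.

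For \ref{l:160618bi}, reflexivity reduces to observing that the zero mapping belongs to $\Fm$, which is immediate from the definition of firm nonexpansiveness. For \ref{l:160618bii}, the inequality $0 \preceq T$ is just $T - 0 = T \in \Fm$ by hypothesis, while $T \preceq \Id$ is the statement $\Id - T \in \Fm$, which is precisely the equivalence between items (i) and (iii) of Fact~\ref{f:fne}.

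For \ref{l:160618biii}, I would simply unfold both sides of the asserted equivalence: $T_0 \preceq T_1$ means $T_1 - T_0 \in \Fm$, while $\Id - T_1 \preceq \Id - T_0$ means $(\Id - T_0) - (\Id - T_1) = T_1 - T_0 \in \Fm$. These are literally the same condition, so no further argument is needed. A prerequisite is that $\Id - T_0$ and $\Id - T_1$ lie in $\Fm$ so that the relation is defined for them, and this follows once more from Fact~\ref{f:fne}.

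For \ref{l:160618biv}, the direction ``$\Leftarrow$'' is obtained by specializing at $\lambda = 1$. For ``$\Rightarrow$'', set $T_\lambda := (1-\lambda)T_0 + \lambda T_1$; by the convexity of $\Fm$ in Corollary~\ref{c:bibi}, $T_\lambda \in \Fm$, so the relation is defined. Then $T_\lambda - T_0 = \lambda(T_1 - T_0)$ and $T_1 - T_\lambda = (1-\lambda)(T_1 - T_0)$; since $T_1 - T_0 \in \Fm$ and $\lambda, 1-\lambda \in [0,1]$, the inclusions $\lambda \Fm \subseteq \Fm$ from Corollary~\ref{c:bibi} give both $T_0 \preceq T_\lambda$ and $T_\lambda \preceq T_1$. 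None of the four items presents a genuine obstacle; the only minor subtlety is making sure, in \ref{l:160618biii} and \ref{l:160618biv}, that each mapping appearing as an argument of $\preceq$ actually lies in $\Fm$, which is in each case supplied by Fact~\ref{f:fne} or Corollary~\ref{c:bibi}.
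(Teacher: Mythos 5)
Your proposal is correct and follows essentially the same route as the paper: each item is unfolded from the definition of $\preceq$, with $\Id-T\in\Fm$ supplied by Fact~\ref{f:fne} and the scaling/convexity of $\Fm$ supplied by Corollary~\ref{c:bibi}. The only cosmetic difference is that you make explicit the $\lambda=1$ specialization for the converse in \ref{l:160618biv} and the well-definedness checks, which the paper leaves implicit.
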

\begin{proof}
\ref{l:160618bi}: 
$T-T = 0$ is firmly nonexpansive. 
\ref{l:160618bii}: 
$T-0=T$ is firmly nonexpansive as is
$\Id-T$ by Fact~\ref{f:fne}. 
\ref{l:160618biii}: 
Indeed, $T_1-T_0 = (\Id-T_0)-(\Id-T_1)$.
\ref{l:160618biv}: 
Suppose that $T_0\preceq T_1$, i.e., $T_1-T_0\in\Fm$. 
Let $\lambda\in[0,1]$.
Then $T_1-((1-\lambda)T_0+\lambda T_1) = (1-\lambda)(T_1-T_0)$
and $(1-\lambda)T_0+\lambda T_1-T_0 = \lambda(T_1-T_0)$ both of
which are firmly nonexpansive by Corollary~\ref{c:bibi}. 
The converse implication is trivial. 
\end{proof}

The following two observations are easily verified. 

\begin{example} {\rm \textbf{(lack of symmetry)}}
Suppose that $X\neq\{0\}$. 
Then $0\preceq \Id$ but $\Id\not\preceq 0$.
\end{example}

\begin{example} {\rm \textbf{(lack of antisymmetry)}}
\label{ex:noantisymm}
Suppose that $x_1$ and $x_2$ are two distinct vectors in $X$,
and set $(\forall x\in X)$ $T_1(x) := x_1$ and 
$T_2(x) := x_2$.
Then $T_1\preceq T_2$ and $T_2\preceq T_1$ yet
$T_1\neq T_2$. 
\end{example}

%\begin{remark}
In Section~\ref{s:last}, we will present a quotient construction that makes
the binary relation antisymmetric.
%\end{remark}

We now turn to proximal mappings which allows us to obtain stronger
conclusions. 

\begin{theorem}
\label{t:magda}
Let $f$ and $g$ be in $\Gx$.
Then the following are equivalent:
\begin{enumerate}
\item
\label{t:magdai}
$\px{f}\preceq\px{g}$, i.e., $\px{g}-\px{f}\in\Fm$.
\item 
\label{t:magdaii}
$\px{g}-\px{f}\in\Pm$.
\item
\label{t:magdaiii}
$\env{g^*}-\env{f^*}\in\Ex$. 
\item
\label{t:magdaiv}
$\env{f}-\env{g}\in\Ex$. 
\end{enumerate}
\end{theorem}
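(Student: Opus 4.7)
The plan is to verify the equivalences in the cycle
\ref{t:magdai}$\Leftrightarrow$\ref{t:magdaii}$\Leftrightarrow$\ref{t:magdaiii}$\Leftrightarrow$\ref{t:magdaiv}, using three ingredients from Section~\ref{s:aux} and the introduction: Corollary~\ref{c:magda} (linking proximal-mapping differences to monotone firmly nonexpansive differences), the identity $\nabla\env{h^*}=\px{h}$, and Moreau's decomposition $\qq=\env{h}+\env{h^*}$.

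First I would dispose of \ref{t:magdai}$\Leftrightarrow$\ref{t:magdaii}. The inclusion $\Pm\subseteq\Fm$ handles \ref{t:magdaii}$\Rightarrow$\ref{t:magdai}. For the converse, firm nonexpansiveness implies monotonicity, so \ref{t:magdai} makes $\px{g}-\px{f}$ a monotone difference of two proximal mappings, and Corollary~\ref{c:magda} promotes it to an element of $\Pm$.

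Next, for \ref{t:magdaii}$\Leftrightarrow$\ref{t:magdaiii}, set $F:=\env{g^*}-\env{f^*}$, so that $\nabla F=\px{g}-\px{f}$ by the identity $\nabla\env{h^*}=\px{h}$. Assuming \ref{t:magdaii}, write $\nabla F=\px{k}$ with $k\in\Gx$; then $F-\env{k^*}$ has vanishing gradient on the Hilbert space $X$, hence equals a constant $c\in\RR$, and since $\env{k^*}+c=\env{k^*+c}$ with $k^*+c\in\Gx$, we conclude $F=\env{k^*+c}\in\Ex$. Conversely, assuming \ref{t:magdaiii} we have $F=\env{k}$ for some $k\in\Gx$; taking gradients and applying the same identity with $h=k^*$ (noting $k^{**}=k$) yields $\px{g}-\px{f}=\nabla\env{k}=\px{k^*}\in\Pm$.

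Finally, \ref{t:magdaiii}$\Leftrightarrow$\ref{t:magdaiv} is a pointwise identity rather than an implication: Moreau's decomposition applied to $f$ and $g$ gives
\[
\env{g^*}-\env{f^*}=(\qq-\env{g})-(\qq-\env{f})=\env{f}-\env{g},
\]
so the two memberships in $\Ex$ say exactly the same thing. The only subtle step in the whole argument is the constant-shift bookkeeping in \ref{t:magdaii}$\Leftrightarrow$\ref{t:magdaiii}; apart from that, every step is a direct invocation of an already-established fact.
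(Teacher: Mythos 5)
Your proposal is correct and follows essentially the same route as the paper: \ref{t:magdai}$\Leftrightarrow$\ref{t:magdaii} via Corollary~\ref{c:magda}, \ref{t:magdaii}$\Leftrightarrow$\ref{t:magdaiii} by integrating/differentiating through $\nabla\env{h^*}=\px{h}$, and \ref{t:magdaiii}$\Leftrightarrow$\ref{t:magdaiv} from the Moreau identity $\env{g^*}-\env{f^*}=(\qq-\env{g})-(\qq-\env{f})=\env{f}-\env{g}$. The only difference is that the paper compresses the middle step to the words ``Integrate''/``Differentiate,'' whereas you carry out the constant-of-integration bookkeeping ($F-\env{k^*}$ has zero gradient, hence is a constant $c$, and $\env{k^*}+c=\env{k^*+c}\in\Ex$) explicitly and correctly.
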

\begin{proof}
``\ref{t:magdai}$\Leftarrow$\ref{t:magdaii}'': Clear.
``\ref{t:magdai}$\Rightarrow$\ref{t:magdaii}'': 
Since $\px{g}-\px{f}$ is firmly nonexpansive it is also
monotone. Now apply Corollary~\ref{c:magda}.
``\ref{t:magdaii}$\Rightarrow$\ref{t:magdaiii}'': 
Integrate.
``\ref{t:magdaii}$\Leftarrow$\ref{t:magdaiii}'': 
Differentiate. 
``\ref{t:magdaiii}$\Leftarrow$\ref{t:magdaiv}'': 
This is clear since
$\env{g^*}-\env{f^*} = (\qq-\env{g})-(\qq-\env{f}) =
\env{f}-\env{g}$. 
\end{proof}

\begin{theorem} {\rm \textbf{(transitivity for proximal
mappings)}}
\label{t:proxtransi}
Let $f,g,h$ be in $\Gx$ such that 
$\px{f}\preceq\px{g}$ and
$\px{g}\preceq\px{h}$.
Then $\px{f}\preceq\px{h}$.
\end{theorem}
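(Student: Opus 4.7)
The plan is to unpack the definition of $\preceq$ and then exploit the tools assembled in Section~\ref{s:aux}, specifically the Baillon--Haddad-style characterization behind Corollary~\ref{c:160618b} that upgrades nonexpansive monotone linear combinations of proximal mappings to proximal mappings. The hypotheses translate to saying that both $\px{g}-\px{f}$ and $\px{h}-\px{g}$ lie in $\Fm$, and the target $\px{h}-\px{f}$ decomposes as the telescoping sum of these two differences.

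First I would verify that $\px{h}-\px{f}$ is monotone: since firmly nonexpansive operators are monotone, and the sum of monotone operators is monotone, the identity
\[
\px{h}-\px{f} = (\px{h}-\px{g}) + (\px{g}-\px{f})
\]
immediately gives monotonicity from the two hypotheses. Next I would verify that $\px{h}-\px{f}$ is nonexpansive, which is a direct application of Lemma~\ref{l:160618a} to the two proximal (hence firmly nonexpansive) mappings $\px{h}$ and $\px{f}$; crucially, note that we use nonexpansiveness of $\px{h}-\px{f}$ \emph{itself}, not of the two summands separately, since adding nonexpansive maps does not preserve nonexpansiveness.

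At this point $\px{h}-\px{f}$ is a monotone, nonexpansive linear combination (with coefficients $1$ and $-1$) of the proximal mappings $\px{h}$ and $\px{f}$, so Corollary~\ref{c:160618b} applies and yields $\px{h}-\px{f}\in\Pm\subseteq\Fm$. By Definition~\ref{d:ourorder}, this is exactly $\px{f}\preceq\px{h}$.

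The only subtle point is step two: it would be tempting to try to conclude nonexpansiveness by summing nonexpansiveness of the two differences, which fails. The role of Lemma~\ref{l:160618a} is precisely to sidestep this issue by extracting nonexpansiveness directly from firm nonexpansiveness of $\px{f}$ and $\px{h}$, bypassing the intermediate operator $\px{g}$. Once that observation is in place, the proof is essentially a one-line invocation of Corollary~\ref{c:160618b}.
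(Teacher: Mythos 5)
Your proof is correct and follows essentially the same route as the paper: both arguments reduce to showing that $\px{h}-\px{f}$ is monotone (as a sum of two monotone operators) and then upgrading it to a proximal mapping via the Baillon--Haddad machinery. The only cosmetic difference is that you invoke Lemma~\ref{l:160618a} and Corollary~\ref{c:160618b} directly, whereas the paper first passes through Theorem~\ref{t:magda} to write the two differences as proximal mappings and then applies Corollary~\ref{c:magda}, which is just these same ingredients packaged together.
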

\begin{proof}
By the hypothesis and Theorem~\ref{t:magda}, 
there exist $a$ and $b$ in $\Gx$ such that
\begin{equation}
\px{g}-\px{f}=\px{a}
\;\;\text{and}\;\;
\px{h}-\px{g}=\px{b}.
\end{equation}
Adding yields $\px{h}-\px{f}=\px{a}+\px{b}$. 
On the other hand, $\px{a}+\px{b}$ is monotone because
$\px{a}$ and $\px{b}$ are monotone. 
Altogether, we deduce from Corollary~\ref{c:magda}, 
that $\px{h}-\px{f}$ is a proximal mapping. 
\end{proof}

\begin{corollary}
{\rm \textbf{(proximal mappings are directed)}}
$(\Pm,\preceq)$ is a directed set.
\end{corollary}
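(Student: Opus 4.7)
My plan is to observe that $(\Pm,\preceq)$ has a greatest element, which makes directedness immediate. A directed set requires a preorder in which every pair (equivalently, every finite subset) has a common upper bound, so I first need to confirm that $\preceq$ restricted to $\Pm$ is a genuine preorder. Reflexivity on $\Fm$ (hence on $\Pm$) is given by Lemma~\ref{l:160618b}\ref{l:160618bi}, and transitivity on $\Pm$ is precisely the content of Theorem~\ref{t:proxtransi}. So the only remaining task is to produce, for any pair $P_1,P_2\in\Pm$, a common upper bound in $\Pm$.

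The key observation is that $\Id$ itself lies in $\Pm$: taking $f=0\in\Gx$, one has $\partial f=0$, so $\px{f}=(\Id+0)^{-1}=\Id$. Combined with Lemma~\ref{l:160618b}\ref{l:160618bii}, which asserts $T\preceq\Id$ for every $T\in\Fm$, this shows that $\Id$ is a greatest element of $(\Pm,\preceq)$. Therefore, for any $P_1,P_2\in\Pm$, the element $\Id\in\Pm$ satisfies $P_1\preceq\Id$ and $P_2\preceq\Id$, which is exactly the directedness condition.

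There is no real obstacle here; the statement follows from results already in hand. The only subtle point worth flagging is that ``directed set'' implicitly requires a preorder, which is why Theorem~\ref{t:proxtransi} (rather than anything new) must be invoked alongside the reflexivity and the existence of a maximum from Lemma~\ref{l:160618b}.
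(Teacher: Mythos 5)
Your proof is correct and follows essentially the same route as the paper: reflexivity from Lemma~\ref{l:160618b}\ref{l:160618bi}, transitivity from Theorem~\ref{t:proxtransi}, and $\Id$ (which indeed lies in $\Pm$ as $\px{0}$) as a common upper bound via Lemma~\ref{l:160618b}\ref{l:160618bii}. Your explicit verification that $\Id\in\Pm$ is a small but welcome addition that the paper leaves implicit.
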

\begin{proof}
The reflexivity of $\preceq$ was observed in
Lemma~\ref{l:160618b}\ref{l:160618bi} while
the transitivity of $\preceq$ 
is a consequence of Theorem~\ref{t:proxtransi}.
Finally, if $P_1$ and $P_2$ are in $\Pm$,
then $P_1\preceq \Id$ and $P_2\preceq \Id$
by Lemma~\ref{l:160618b}\ref{l:160618bii}. 
\end{proof}

We conclude this section with an example.

\begin{example}
Denote the unit ball centered at $0$ of radius $1$ in $X$ by $C$,
and set $T := \Id-\pj{C}$.
Then $(\forall \nnn)$
$T^n = \Id- \pj{nC}\in\Pm$ and
$T^{n}-T^{n+1} = \pj{(n+1)C}-\pj{n C}\in\Pm$. 
Consequently,
\begin{equation}
(\forall\nnn)\quad
0 \preceq T^{n+1}\preceq T^n \preceq \cdots \preceq T \preceq
T^0=\Id. 
\end{equation}
\end{example}
\begin{proof}
The identity for $T^n$ is easily verified by mathematical
induction and discussing cases.
To verify that $T^n-T^{n+1}$, observe that
by Corollary~\ref{c:magda} it suffices to show that 
$T^n-T^{n+1}$ is monotone.
In turn, this is achieved by discussing cases and invoking
the Cauchy--Schwarz inequality. 
\end{proof}

\section{Partitions of the identity and the partial sum property}

\label{s:weird}

In this section, we discuss 
partial sums of firmly nonexpansive mappings arising
in partitions of the identity. 
Somewhat surprisingly, we also show 
that the transitivity result for proximal mappings
(Theorem~\ref{t:proxtransi}) \emph{fails} for firmly nonexpansive
mappings (see Example~\ref{ex:badboy} below). 

We start with a positive result. 

\begin{lemma}
\label{l:three}
Let $T_1,T_2,T_3$ be in $\Fm$ such that
such that $T_1+T_2+T_3=\Id$.
Then $T_1+T_2$ is firmly nonexpansive.
\end{lemma}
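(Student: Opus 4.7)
The plan is to exploit the partition-of-identity hypothesis to rewrite $T_1+T_2$ in a way that directly matches the complementation property of firmly nonexpansive mappings recorded in Fact~\ref{f:fne}. Specifically, since $T_1+T_2+T_3=\Id$, we have the identity
\begin{equation*}
T_1+T_2 \;=\; \Id-T_3.
\end{equation*}

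From here the argument is immediate: Fact~\ref{f:fne} tells us that $T\in\Fm$ if and only if $\Id-T\in\Fm$ (this is the equivalence between items (i) and (iii)). Since $T_3\in\Fm$ by hypothesis, we conclude that $\Id-T_3\in\Fm$, and hence $T_1+T_2\in\Fm$.

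I would expect there to be no real obstacle here — the lemma is essentially a one-line consequence of Fact~\ref{f:fne}, and curiously the proof does not use the assumption that $T_1$ and $T_2$ are themselves firmly nonexpansive; only the firm nonexpansiveness of $T_3$ is needed. What makes the statement worth isolating is presumably its contrast with the four-term situation: if one instead had $T_1+T_2+T_3+T_4=\Id$, then $T_1+T_2=\Id-(T_3+T_4)$, but $T_3+T_4$ need not be firmly nonexpansive, so the same trick breaks down. This three-term phenomenon is presumably what sets up the failure-of-transitivity example (Example~\ref{ex:badboy}) promised in the paragraph preceding the lemma.
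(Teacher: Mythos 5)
Your proof is correct and is exactly the paper's argument: write $T_1+T_2=\Id-T_3$ and invoke the complementation equivalence in Fact~\ref{f:fne}. Your side observation that only the firm nonexpansiveness of $T_3$ is used is also accurate.
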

\begin{proof}
Since $T_1+T_2=\Id-T_3$, this follows from Fact~\ref{f:fne}. 
\end{proof}

For proximal mappings we are able to extend Lemma~\ref{l:three} 
from $3$ to any number of operators: 

\begin{theorem}
{\rm \textbf{(partial sum property for proximal mappings)}}
\label{t:partialsum}
Let $n\in\{1,2,\ldots\}$, 
let $P_1,\ldots,P_n$ be in $\Pm$ such that
$P_1+P_2+\cdots + P_{n}=\Id$, and
let $m\in\{1,\ldots,n\}$.
Then $P_1+\cdots+P_m\in\Pm$. 
\end{theorem}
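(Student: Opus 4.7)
The plan is to apply the Baillon--Haddad-type equivalence in Fact~\ref{f:BH} to the Moreau-envelope potentials of the $P_i$. The point is that $S_m:=P_1+\cdots+P_m$ will be the gradient of some $f\in\Gx$ for which $\qq-f$ also happens to be convex, because $\qq-f$ is exactly the sum of the \emph{remaining} envelopes $g_{m+1}+\cdots+g_n$.

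I would carry out the argument in four short steps. First, since each $P_i\in\Pm$, write $P_i=\px{f_i}$ and set $g_i:=\env{f_i^*}$; by Fact~\ref{f:BH}, each $g_i\in\Gx$ is finite-valued and \frechet\ differentiable on $X$ with $\nabla g_i=P_i$. Second, I would integrate the hypothesis $\sum_{i=1}^n P_i=\Id=\nabla \qq$: since $X$ is connected and $\nabla\bigl(\sum_i g_i-\qq\bigr)\equiv 0$, the difference $\sum_i g_i-\qq$ is a constant, which I absorb into $g_1$ without affecting its gradient. Hence I may assume $\sum_{i=1}^n g_i=\qq$.

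Third, set $f:=g_1+\cdots+g_m$. Then $f\in\Gx$ as a sum of finite continuous convex functions, and $\qq-f=g_{m+1}+\cdots+g_n$ is again a (possibly empty) sum of convex functions, hence convex. Fourth, by the implication \ref{f:BHiii}$\Rightarrow$\ref{f:BHvi} in Fact~\ref{f:BH}, there exists $h\in\Gx$ with $\nabla f=\px{h}$. Since $\nabla f=\sum_{i=1}^m\nabla g_i=S_m$, this yields $S_m=\px{h}\in\Pm$, as desired.

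The only obstacle worth flagging is the antidifferentiation step, and it is routine: all $g_i$ are \frechet\ differentiable on the connected Hilbert space $X$, so agreement of gradients forces the potentials to agree up to an additive constant. Everything else is a single invocation of Fact~\ref{f:BH}, which does the real work by converting convexity of $\qq-f$ into firm nonexpansiveness of $\nabla f$ and its identification as a proximal mapping; in particular one avoids having to verify the nonexpansiveness of $S_m$ by hand (a route that, via Corollary~\ref{c:160618b}, would reduce to showing $\langle S_m x-S_m y,R_m x-R_m y\rangle\ge 0$, which is not transparent from the constraint $S_m+R_m=\Id$ alone).
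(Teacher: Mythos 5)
Your proposal is correct and follows essentially the same route as the paper's proof: identify each $P_i$ with the gradient of an envelope $g_i=\env{f_i^*}$, normalize so that $\sum_i g_i=\qq$, observe that $\qq-(g_1+\cdots+g_m)=g_{m+1}+\cdots+g_n$ is convex, and invoke Fact~\ref{f:BH} to conclude that $\nabla(g_1+\cdots+g_m)=P_1+\cdots+P_m$ is a proximal mapping. Your explicit treatment of the antidifferentiation/constant-absorption step is a point the paper leaves implicit, but the argument is the same.
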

\begin{proof}
There exist functions
$f_1,\ldots,f_{n}$ in $\Gx$ such that
for each $i$, 
$\nabla \env{f_i^*} = P_i$, and
$\env{f_1^*}+\cdots+\env{f_{n}^*} = \qq$.
It follows that
\begin{equation}
\qq - \big(\env{f_1^*}+\cdots+\env{f_{m}^*}\big) = 
\env{f_{m+1}^*}+\cdots+\env{f_{n}^*}
\end{equation}
is convex. 
By Fact~\ref{f:BH}, 
$\nabla (\env{f_1^*}+\cdots+\env{f_{m}^*})=P_1+\cdots+P_m$ is a
proximal mapping.
\end{proof}

Surprisingly, the counterpart of Theorem~\ref{t:partialsum}
for firmly nonexpansive mappings is false as the next two results
show.

\begin{lemma}
\label{l:ttr}
In $X=\RR^2$, 
let $n\in\{2,3,\ldots\}$, 
let $\theta \in
\;\big]\negthinspace\arccos(1/\sqrt{2}),\arccos(1/\sqrt{2n})\big]$,
%there exist $\theta\in\left]\pi/4,\pi/2\right[$
set $\alpha:=1/(2n\cos(\theta))$, and 
denote by $R_\theta$ be the counterclockwise rotator by $\theta$.
Then the following hold:
\begin{enumerate}
\item $\alpha R_\theta$ and $\alpha R_{-\theta}$ are firmly
nonexpansive.
\item $n\alpha R_\theta$ and $n\alpha R_{-\theta}$ are not
firmly nonexpansive.
\item $n\alpha R_\theta + n\alpha R_{-\theta} = \Id$.
\end{enumerate}
\end{lemma}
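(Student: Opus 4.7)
The plan is to reduce everything to the characterization from Fact~\ref{f:fne} that a mapping $T$ is firmly nonexpansive if and only if $2T-\Id$ is nonexpansive, which for a linear $T$ on $\RR^2$ amounts to computing an operator norm.

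First I would record the matrix of $R_\theta$ in the standard basis and observe that $R_\theta + R_{-\theta} = 2\cos(\theta)\Id$. Part~(iii) is then immediate: $n\alpha(R_\theta + R_{-\theta}) = 2n\alpha\cos(\theta)\Id = \Id$ by the definition of $\alpha$.

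Next, for any scalar $c>0$, I would compute
\begin{equation*}
2cR_\theta - \Id = (2c\cos\theta - 1)\Id + 2c\sin(\theta)\,J,
\qquad J := \begin{pmatrix} 0 & -1 \\ 1 & 0 \end{pmatrix}.
\end{equation*}
Since matrices of the form $p\Id + qJ$ correspond to multiplication by $p+iq\in\mathbb{C}$, the operator norm is $\sqrt{p^2+q^2}$. A short expansion then gives
\begin{equation*}
\|2cR_\theta - \Id\|^2 = (2c\cos\theta-1)^2 + (2c\sin\theta)^2 = 4c^2 - 4c\cos\theta + 1.
\end{equation*}
Hence $2cR_\theta-\Id$ is nonexpansive if and only if $4c^2 \leq 4c\cos\theta$, i.e.\ (since $c>0$) if and only if $c\leq \cos\theta$. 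The same computation works verbatim with $-\theta$ in place of $\theta$ since $\cos$ is even.

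Applying this with $c=\alpha = 1/(2n\cos\theta)$, the condition $\alpha \leq \cos\theta$ becomes $2n\cos^2\theta \geq 1$, i.e., $\cos\theta \geq 1/\sqrt{2n}$, which is exactly the upper bound $\theta \leq \arccos(1/\sqrt{2n})$; this yields (i). Applying it with $c=n\alpha = 1/(2\cos\theta)$, the condition $n\alpha \leq \cos\theta$ becomes $2\cos^2\theta \geq 1$, i.e., $\cos\theta \geq 1/\sqrt{2}$, which is violated precisely because $\theta > \arccos(1/\sqrt{2})$; this yields (ii). The only step that requires any care is checking that the operator norm formula above really is an equality and not just an upper bound, which I would justify by noting that $p\Id+qJ$ acts on $\RR^2\cong\mathbb{C}$ as multiplication by a complex number, whose operator norm coincides with its modulus. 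No part of this should be a real obstacle; the entire argument is a matrix-norm computation wrapped around the $2T-\Id$ characterization.
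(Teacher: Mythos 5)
Your argument is correct and arrives at exactly the scalar criterion that drives the paper's proof, namely that $cR_\theta$ is firmly nonexpansive if and only if $0\le c\le\cos(\theta)$; the only difference is that you derive this via the $2T-\Id$ characterization and an operator-norm computation, whereas the paper reads it off in one line from the definition using $\scal{x}{R_\theta x}=\cos(\theta)\normsq{x}=\cos(\theta)\normsq{R_\theta x}$. Everything else — part (iii) from $R_\theta+R_{-\theta}=2\cos(\theta)\Id$, the substitutions $c=\alpha$ and $c=n\alpha$, and the evenness of $\cos$ to handle $R_{-\theta}$ — matches the paper's argument.
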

\begin{proof}
Observe that $(\forall x\in X)$
$\scal{x}{R_\theta x} = \cos(\theta)\|x\|^2 =
\cos(\theta)\|R_\theta x\|^2$.
%If $\alpha>0$, then $\scal{x}{\alpha R_\theta
%x}=(\cos(\theta)/\alpha)\|\alpha R_\theta x\|^2$. 
It follows that 
\begin{equation}
\label{e:ttr1}
(\forall \alpha\in\RP)\quad
\alpha R_\theta\in\Fm
\quad\Leftrightarrow\quad
\alpha \in [0,\cos(\theta)].
\end{equation}
%Now let $n\in\{1,2,\ldots\}$ and 
Let $\alpha\in\RPP$ satisfy
\begin{equation}
\label{e:ttr2}
n\alpha R_\theta + n\alpha R_{-\theta} =
n\alpha(R_\theta+R_{-\theta}) = n\alpha\begin{pmatrix}
2\cos(\theta) & 0 \\
0 & 2\cos(\theta)
\end{pmatrix} = \Id;
\end{equation}
equivalently, 
\begin{equation}
\label{e:ttr3}
\alpha := \frac{1}{2n\cos(\theta)}.
\end{equation}
Combining \eqref{e:ttr1} and \eqref{e:ttr3} yields
\begin{equation}
\label{e:ttr4}
\alpha R_\theta\in\Fm
\quad\Leftrightarrow\quad
\frac{1}{2n}\leq \cos^2(\theta).
\end{equation}
On the other hand, 
$(n\alpha) R_\theta\notin\Fm$ 
$\Leftrightarrow$ $n\alpha >\cos(\theta)$ 
$\Leftrightarrow$ $(2\cos(\theta))^{-1} >\cos(\theta)$. 
Altogether,
\begin{subequations}
\label{e:ttr5}
\begin{align}
\big[\alpha R_\theta\in\Fm\;\text{and}\;n\alpha
R_\theta\notin\Fm\big]
&\Leftrightarrow
\cos(\theta) < \frac{1}{2\cos(\theta)} \leq n\cos(\theta)\\
&\Leftrightarrow
\cos^2(\theta) < \frac{1}{2} \leq n\cos^2(\theta)\\
&\Leftrightarrow
\cos(\theta) < \frac{1}{\sqrt{2}} \leq \sqrt{n}\cos(\theta)\\
&\Leftrightarrow
\frac{1}{\sqrt{2n}} \leq \cos(\theta) < \frac{1}{\sqrt{2}}. 
\end{align}
\end{subequations}
Note that \eqref{e:ttr5} has no solution for $n=1$;
however, \eqref{e:ttr5} has solutions for every $n\geq 2$.
Because $R_{-\theta}=R_\theta^*$, the result follows with
\cite[Corollary~4.3]{BC2011} or by arguing along the same lines
as above for $R_{-\theta}$. 
\end{proof}

We now obtain the following direct consequence of
Lemma~\ref{l:ttr}:

\begin{example}
{\rm \textbf{(partial sum property fails for general firmly nonexpansive
mappings)}}
\label{ex:ttr}
Let $n\in\{2,3,\ldots\}$, and let $\theta$, $\alpha$, and
$R_{\pm\theta}$ be as in Lemma~\ref{l:ttr}.
Furthermore, set $T_1 := \cdots = T_n := \alpha R_\theta$ and 
$T_{n+1} := \cdots = T_{2n} := \alpha R_{-\theta}$. 
Then each $T_i$ is firmly nonexpansive, 
$T_1+\cdots +T_{2n}=\Id$, yet
$T_1+\cdots+T_n$ is \emph{not} firmly nonexpansive.
\end{example}
We conclude this section with another negative result.

\begin{example}
\label{ex:badboy}
{\rm \textbf{(lack of transitivity for firmly nonexpansive
mappings})}
Suppose that $X=\RR^2$, and set 
$R:=\alpha R_\theta$ and $S := \alpha R_{-\theta}$,
where $\theta$ and $\alpha$ are as in Example~\ref{ex:ttr} for
$n=2$.
Then 
\begin{subequations}
\begin{align}
\label{e:fd1}
&\text{$R$ and $S$ are firmly nonexpansive,}\\
\label{e:fd2}
&\text{$2R$ and $2S$ are \emph{not} firmly nonexpansive,}\\
\label{e:fd3}
&\text{$2R+2S = \Id$.}
\end{align}
\end{subequations}
Now set
\begin{equation}
T_1 := S,\;\;
T_2 := R+S,\;\;
T_3 := 2R+S.
\end{equation}
Then $T_1\in\Fm$ by \eqref{e:fd1}.
Next, \eqref{e:fd1} and \eqref{e:fd3} imply
that $T_3 = \Id-S\in\Fm$. 
Since $\Fm$ is convex (Corollary~\ref{c:bibi}), it follows that
$T_2 = (T_1+T_3)/2 \in \Fm$.
Because $T_2-T_1 =T_3-T_2= R\in\Fm$  by \eqref{e:fd1}, we have
\begin{equation}
T_1\preceq T_2
\;\;\text{and}\;\;
T_2\preceq T_3.
\end{equation}
On the other hand,
$T_3-T_1 = 2R\notin\Fm$ by \eqref{e:fd2}.
Thus,
\begin{equation}
T_1\not\preceq T_3.
\end{equation}
Altogether, we deduce that 
\begin{equation}
\text{
$(\Fm,\preceq)$ is \emph{not}
transitive.
}
\end{equation}
\end{example}

\section{Compatibility with Zarantonello's partial order}

\label{s:Z}

Zarantonello introduced in \cite{Zara1,Zara2}
a partial ordering of the set of projectors
onto nonempty closed convex cones contained in $X$ via
\begin{equation}
\label{e:Z}
\pj{C}\preceqz \pj{D}
\;\;:\Leftrightarrow\;\; \pj{C}\pj{D} = \pj{C}.
\end{equation}
He established various nice properties which we
collect in the following result.

\begin{fact}
{\rm \textbf{(Zarantonello)}}
\label{f:Z}
Let $C$ and $D$ be nonempty closed convex cones in $X$.
Then the following hold:
\begin{enumerate}
\item
\label{f:Zi}
$\pj{C} \preceqz \pj{D}$ 
$\Leftrightarrow$
$\pj{D}-\pj{C}$ is a projector, in which case\footnote{Here
$C^\ominus := \menge{x\in X}{\sup\scal{C}{x}=0}$ is the \emph{polar
cone} of $C$.}
$\pj{D}-\pj{C}=\pj{D\cap C^\ominus}$.
\item
\label{f:Zii}
$\pj{C} \preceqz \pj{D}$ 
$\Leftrightarrow$
$[\pj{C}\pj{D} = \pj{D}\pj{C}$ and
$(\forall x\in X)$ $\scal{x}{\pj{C}x}\leq\scal{x}{\pj{D}x}]$.
\item
\label{f:Ziii}
$\pj{C} \preceqz \pj{D}$ 
$\Rightarrow$ $C\subseteq D$.
\item
\label{f:Ziv}
$\pj{C} \preceqz \pj{D}$ 
$\Rightarrow$ 
$\pj{C},\pj{D},\pj{C^\ominus},\pj{D^\ominus}$ pairwise commute
with their products being the projectors onto the
intersection of their ranges 
($\pj{C}\pj{D}=\pj{C}\pj{D}=\pj{C\cap D}$, etc.).
\item
\label{f:Zv}
Suppose that $C$ and $D$ are subspaces.
Then
$\pj{C} \preceqz \pj{D}$ 
$\Leftrightarrow$
$C\subseteq D$.
\end{enumerate}
\end{fact}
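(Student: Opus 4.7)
The plan is to follow Zarantonello's original treatment in \cite{Zara1,Zara2}, organizing the arguments around Moreau's conic decomposition: for any nonempty closed convex cone $K \subseteq X$, every $x \in X$ splits as $x = \pj{K}x + \pj{K^\ominus}x$ with $\scal{\pj{K}x}{\pj{K^\ominus}x}=0$, equivalently $\pj{K}+\pj{K^\ominus}=\Id$ pointwise. This identity together with the contraction property of projectors are the workhorses behind most of the claims.

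For the forward direction of \ref{f:Zi}, I would set $y := \pj{D}x$ and use $\pj{C}\pj{D}=\pj{C}$ to write
\[
\pj{D}x - \pj{C}x \;=\; y - \pj{C}y \;=\; \pj{C^\ominus}y \;=\; \pj{C^\ominus}(\pj{D}x),
\]
and then identify this expression with $\pj{D \cap C^\ominus}(x)$ via the variational inequality characterizing the projection onto the cone $D \cap C^\ominus$. The subtle ingredient is checking $\pj{C^\ominus}(\pj{D}x) \in D$, which rests on the interaction of the two cone structures under the hypothesis $\pj{C}\pj{D}=\pj{C}$. The reverse implication ``$\pj{D}-\pj{C}$ is a projector onto some closed convex cone $E$ $\Rightarrow$ $\pj{C}\pj{D}=\pj{C}$'' would proceed by first showing $E \subseteq C^\ominus$ (forced by firm nonexpansiveness together with the orthogonal decomposition of $\pj{D}x$), hence $\pj{C}\pj{E}=0$, and then composing $\pj{C}$ with $\pj{D}=\pj{C}+\pj{E}$.

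Part \ref{f:Zii} reduces to \ref{f:Zi}: the inequality is just monotonicity of $\pj{D}-\pj{C}=\pj{D\cap C^\ominus}$, while commutativity reads off the explicit formula (a symmetric application yields $\pj{D}\pj{C}=\pj{C}$ as well). Conversely, commutativity and the inequality, combined with firm nonexpansiveness of $\pj{C}$, allow one to expand $\|\pj{C}\pj{D}x - \pj{C}x\|^2$ and force it to vanish. For \ref{f:Ziii}, given $c \in C$ the identity $\pj{C}(\pj{D}c) = \pj{C}c = c$ together with Cauchy--Schwarz and $\|\pj{D}c\| \leq \|c\|$ forces $\pj{D}c = c$, hence $c \in D$. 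Part \ref{f:Ziv} follows by applying \ref{f:Zi} symmetrically to the polar cones (using that polarity reverses $\preceqz$) and combining with $\pj{K}+\pj{K^\ominus}=\Id$. Finally, \ref{f:Zv} reduces in the linear setting to the direct observation that $\pj{C}\pj{D}=\pj{C}$ is equivalent to the inclusion $C \subseteq D$.

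The main obstacle I anticipate is the clean identification $\pj{C^\ominus}(\pj{D}x) = \pj{D \cap C^\ominus}(x)$ in \ref{f:Zi}; specifically, verifying that $\pj{C^\ominus}(\pj{D}x)$ lies in $D$ (so that it is a legitimate candidate for the projection onto $D \cap C^\ominus$) is the one step where the hypothesis is used non-trivially rather than mechanically. Once \ref{f:Zi} is secured, all remaining parts unfold quickly from Moreau's decomposition and the firm nonexpansiveness of projectors.
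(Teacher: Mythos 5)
The paper does not actually prove \ref{f:Zi}--\ref{f:Ziv}: it cites \cite[Lemma~5.12]{Zara1} and \cite[page~347]{Zara2}, and only item \ref{f:Zv} receives an argument (namely $C\subseteq D\Rightarrow D=C\oplus(D\cap C^\perp)\Rightarrow \pj{D}-\pj{C}=\pj{D\cap C^\perp}$ is a projector, then invoke \ref{f:Zi}; the converse implication is \ref{f:Ziii}). Your treatment of \ref{f:Zv} is equivalent in substance (for subspaces, $C\subseteq D$ gives $\pj{D}x-x\in D^\perp\subseteq C^\perp$, hence $\pj{C}\pj{D}=\pj{C}$), and your argument for \ref{f:Ziii} (the norm chain forcing $\pj{D^\ominus}c=0$, so $c=\pj{D}c\in D$) is sound. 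So for the portion the paper actually proves, you are fine, and your route agrees with the paper's.

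For \ref{f:Zi}--\ref{f:Ziv}, however, your proposal remains a sketch with a genuine hole at exactly the point you flag. Writing $z:=\pj{C^\ominus}(\pj{D}x)=\pj{D}x-\pj{C}x$, the identification $z=\pj{D\cap C^\ominus}x$ requires the full cone-projection characterization: $z\in D\cap C^\ominus$, $\scal{z}{x-z}=0$, and $x-z\in(D\cap C^\ominus)^\ominus=\overline{C+D^\ominus}$. None of these is mechanical --- since $D$ is a cone and not a subspace, $\pj{D}x-\pj{C}x\in D$ does not follow from $\pj{D}x\in D$ and $\pj{C}x\in C\subseteq D$ --- and the natural way to obtain them is through the commutation relations of \ref{f:Ziv}, which you propose to derive \emph{from} \ref{f:Zi}; as stated the plan is therefore circular. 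Similarly, the assertion that polarity reverses $\preceqz$ (used in your \ref{f:Ziv}) and the converse direction of \ref{f:Zii} are themselves nontrivial components of Zarantonello's Lemma~5.12 rather than consequences you can read off. To make this a proof you would need to establish \ref{f:Ziii}, the commutation identities, and the polar statement jointly rather than sequentially --- essentially reproving Zarantonello's lemma --- or else do as the paper does and simply cite \cite{Zara1,Zara2}.
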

\begin{proof}
\ref{f:Zi}--\ref{f:Ziv}: 
See \cite[Lemma~5.12]{Zara1} and \cite[page~347]{Zara2}. 
\ref{f:Zv}:
If $C\subseteq D$, then $D = C \oplus (D\cap C^\perp)$,
which implies that $\pj{D}-\pj{C} = \pj{D\cap C^\perp}$ is a projector
and $\pj{C}\preceqz\pj{D}$ by \ref{f:Zi}.
The other implication is \ref{f:Ziii}.
\end{proof}

\begin{remark}
Generalizing 
\eqref{e:Z} and hoping that Fact~\ref{f:Z}\ref{f:Zi} holds 
by just replacing projectors by proximal mappings will not work:
indeed, 
$\px{f}-\px{f} =0 = \pj{\{0\}} $ is a proximal map and a
projector yet $\px{f}\px{f}\neq\px{f}$. 
\end{remark}

Next, let us show that Zarantonello's order is compatible with the
order from Definition~\ref{d:ourorder}:

\begin{lemma}
{\rm \textbf{(compatibility with Zarantonello's order)}}
\label{l:Zcomp}
Let $C$ and $D$ be nonempty closed convex cones in $X$.
Then $\pj{C}\preceqz\pj{D}$ 
$\Leftrightarrow$
$\pj{C}\preceq\pj{D}$. 
\end{lemma}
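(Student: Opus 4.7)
The forward direction follows immediately from Fact~\ref{f:Z}\ref{f:Zi}: if $\pj{C}\preceqz\pj{D}$, then $\pj{D}-\pj{C}=\pj{D\cap C^\ominus}$ is a projector onto the nonempty closed convex cone $D\cap C^\ominus$, hence firmly nonexpansive, whence $\pj{C}\preceq\pj{D}$.

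For the reverse direction, I would fix $x\in X$ arbitrary and exploit the defining inequality of firm nonexpansivity of $\pj{D}-\pj{C}$ applied to the pair of points $x$ and $y:=\pj{D}x$. Because $\pj{D}$ is idempotent, $(\pj{D}-\pj{C})y=\pj{D}x-\pj{C}\pj{D}x$, so the first coordinate's increment $(\pj{D}-\pj{C})x-(\pj{D}-\pj{C})y$ collapses to $\pj{C}\pj{D}x-\pj{C}x$, and the firm nonexpansivity inequality reduces to
\[
\|\pj{C}\pj{D}x-\pj{C}x\|^2 \;\leq\; \scal{x-\pj{D}x}{\pj{C}\pj{D}x-\pj{C}x}.
\]

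Next, I would split $x-\pj{D}x=(x-\pj{C}x)+(\pj{C}x-\pj{D}x)$ and estimate the two resulting inner products separately using the variational characterization of $\pj{C}$. Applied at input $x$ with test point $\pj{C}\pj{D}x\in C$, it gives $\scal{x-\pj{C}x}{\pj{C}\pj{D}x-\pj{C}x}\leq 0$. Applied at input $\pj{D}x$ with test point $\pj{C}x\in C$, it gives $\scal{\pj{D}x-\pj{C}\pj{D}x}{\pj{C}x-\pj{C}\pj{D}x}\leq 0$, which after rearrangement and adding $\|\pj{C}\pj{D}x-\pj{C}x\|^2$ becomes $\scal{\pj{C}x-\pj{D}x}{\pj{C}\pj{D}x-\pj{C}x}\leq -\|\pj{C}\pj{D}x-\pj{C}x\|^2$. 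Plugging both estimates back into the displayed inequality yields $\|\pj{C}\pj{D}x-\pj{C}x\|^2\leq -\|\pj{C}\pj{D}x-\pj{C}x\|^2$, forcing $\pj{C}\pj{D}x=\pj{C}x$. Since $x$ was arbitrary, $\pj{C}\pj{D}=\pj{C}$, i.e., $\pj{C}\preceqz\pj{D}$.

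The only genuinely nontrivial step is spotting the substitution $y=\pj{D}x$: this is what converts the firm nonexpansivity of the \emph{difference} $\pj{D}-\pj{C}$ into an inequality directly involving the composition $\pj{C}\pj{D}$. After that, the remaining bookkeeping with two variational inequalities for $\pj{C}$ is routine; notably, the argument never uses that $C$ and $D$ are cones, only that they are nonempty closed convex sets.
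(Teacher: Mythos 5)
Your proof is correct, but your reverse implication takes a genuinely different route from the paper. The paper first invokes Theorem~\ref{t:magda} to upgrade the firmly nonexpansive difference $\pj{D}-\pj{C}$ to a proximal mapping $\px{h}$, then identifies $h=f^*-\qq$ for $f=\tfrac{1}{2}d_C^2-\tfrac{1}{2}d_D^2$ and computes $f^*$ explicitly via the conjugacy formula $\big(\tfrac{1}{2}d_C^2\big)^*=\qq+\iota_{C^\ominus}$ and a difference-of-functions conjugation result; the upshot is that $h$ is $\{0,+\infty\}$-valued, hence an indicator, so $\pj{D}-\pj{C}$ is a projector and Fact~\ref{f:Z}\ref{f:Zi} yields $\pj{C}\preceqz\pj{D}$. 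You instead argue directly at the level of the operators: evaluating the firm nonexpansivity inequality of $\pj{D}-\pj{C}$ at the pair $(x,\pj{D}x)$ and combining it with two variational inequalities for $\pj{C}$ forces $\pj{C}\pj{D}x=\pj{C}x$, which is exactly the defining relation \eqref{e:Z}; I checked the algebra and it closes ($\|\pj{C}\pj{D}x-\pj{C}x\|^2\leq-\|\pj{C}\pj{D}x-\pj{C}x\|^2$). Your argument is shorter and more elementary, bypassing Theorem~\ref{t:magda} and all duality computations, and, as you observe, it proves a stronger statement: firm nonexpansivity of $\pj{D}-\pj{C}$ implies $\pj{C}\pj{D}=\pj{C}$ for arbitrary nonempty closed convex sets, the cone hypothesis being needed only for the forward direction (where both you and the paper use Fact~\ref{f:Z}\ref{f:Zi}). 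What the paper's route buys in exchange is extra structural information and a showcase of its own machinery: it exhibits the difference explicitly as the projector onto $D\cap C^\ominus$ (consistent with Fact~\ref{f:Z}\ref{f:Zi}) and illustrates how the order interacts with envelopes and proximal mappings, which is thematically central to the rest of the paper.
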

\begin{proof}
``$\Rightarrow$'': 
Assume that $\pj{C}\preceqz\pj{D}$. 
By Fact~\ref{f:Z}\ref{f:Zi}, $\pj{D}-\pj{C}$ is a projector,
hence a proximal mapping and thus firmly nonexpansive.
Therefore, $\pj{C}\preceq\pj{D}$. 
``$\Leftarrow$'':
Assume that $\pj{C}\preceq\pj{D}$, i.e., 
$\pj{D}-\pj{C}$ is firmly nonexpansive.
By Theorem~\ref{t:magda},
$\pj{D}-\pj{C}$ is a proximal mapping. 
Hence, for 
\begin{equation}
f := \thalb d_C^2 - \thalb d_D^2,
\end{equation}
there exists $h\in\Gx$ such that 
\begin{equation}
\label{e:fdz}
\nabla f = (\Id-\pj{C})-(\Id-\pj{D}) = \pj{D}-\pj{C} = \px{h}.
\end{equation}
Since $\px{h}$ is monotone, the function $f$ is convex and so
$f\in\Gx$. 
Now $\nabla f = \px{h}=(\Id+\sd{h})^{-1}$
$\Rightarrow$
$\sd{f^*}=(\nabla f)^{-1} =\Id+\sd{h} = \sd{(\qq+h)}$.
Thus, after integrating and noting that \eqref{e:fdz} is
invariant under adding constants to $h$, we may and do assume that 
\begin{equation}
h = f^* - \qq.
\end{equation}
Furthermore, combining 
\cite[Example~13.3(ii) and Example~13.24(iii)]{BC2011}
yields 
\begin{equation}
\big(\thalb d_C^2\big)^* = \qq + \iota_{C^\ominus}.
\end{equation}
Let us now compute $f^*$ at $u\in X$.
By \cite[Proposition~14.19]{BC2011}, 
\begin{subequations}
\begin{align}
f^*(u) &= \sup_{v\in \dom ((1/2)
d_D^2)^* }\Big( \big(\thalb d_C^2\big)^*(u+v)- \big(\thalb
d_D^2\big)^*(v)\Big)\\
&=  \sup_{v\in D^\ominus}\big( \qq(u+v)+\iota_{C^\ominus}(u+v)-
\qq(v)\big)\\
&= \qq(u)+ \sup_{v\in D^\ominus}\big(
\scal{u}{v}+\iota_{C^\ominus}(u+v) \big).
\end{align}
Two cases are now conceivable.
\end{subequations}

\emph{Case~1:} $(\exi v\in D^\ominus)$ $u+v\notin C^\ominus$.\\
Then $f^*(u)=\pinf$ and hence $f^*(u)-q(u)=\pinf$.

\emph{Case~2:} $u+D^\ominus \subseteq C^\ominus$.\\
Then 
\begin{equation}
f^*(u)-q(u) = \sup_{v\in D^\ominus}\scal{u}{v} =
\iota_{D^\ominus}^*(u) = \iota_{D^{\ominus\ominus}}(u) =
\iota_D(u)\in\{0,\pinf\}.
\end{equation}

Altogether, $h = f^*-q$ takes only values in $\{0,\pinf\}$, i.e.,
$h$ is an \emph{indicator function}
Thus, $\px{h}$ must be a projector\footnote{We may obtain
additional information as follows. Suppose first, as in
\emph{Case~2}, that $u+D^\ominus\subseteq C^\ominus$.
This case must occur since $h$ is proper.
(In passing, note that this precisely states that $u$ is in the so-called
\emph{star-difference} $C^\ominus \stardiff D^\ominus$; see \cite{HU1}.)
Since $0\in D^\ominus$, it is clear
that $u\in C^\ominus$.
On the other hand, since we are working with \emph{cones}, we have
$(\forall\varepsilon>0)$ $\varepsilon u + D^\ominus
=\varepsilon(u+D^\ominus)\subseteq \varepsilon C^\ominus = C^\ominus$.
Letting $\varepsilon\to 0^+$, we deduce that $D^\ominus \subseteq
C^\ominus$ and thus $C\subseteq D$ as is also guaranteed by 
Fact~\ref{f:Z}\ref{f:Zi}.
If conversely $u\in C^\ominus$, then $u+D^\ominus \subseteq
u+C^\ominus\subseteq C^\ominus$.
Altogether, we have shown that $u$ is as in \emph{Case~2} if and
only if $u\in C^\ominus$. 
Therefore, 
$h=\iota_{D\cap C^\ominus}$, which 
is consistent with Fact~\ref{f:Z}\ref{f:Zi}.}. 
Therefore, by Fact~\ref{f:Z}\ref{f:Zi},
$\pj{C}\preceqz\pj{D}$. 
\end{proof}

\section{Compatibility with the Loewner order via resolvents}

\label{s:L}

In this section, we assume that 
\begin{equation}
X = \SR^n := \menge{A\in\RR^{n\times n}}{A=A^*}
\end{equation}
is the finite-dimensional Hilbert 
space of all real symmetric matrices of size $n\times n$ with the
inner product $\scal{A}{B}$ being the trace of $AB$.
We shall focus on the closed convex cone of 
\emph{positive semidefinite matrices}:
\begin{equation}
\SR^n_+ := \menge{A\in \SR^n}{(\forall x\in\RR^n)\;\scal{x}{Ax}\geq 0}
= \menge{A\in \SR^n}{\text{$A$ is monotone}}. 
\end{equation}
Let $A$ and $B$ be in $\SR^n_+$. 
The classical \emph{Loewner} (or L\"owner) \emph{order} \cite{Lowner} 
states
\begin{equation}
B \preceql A
\;\;:\Leftrightarrow\;\;
A-B\in\SR^n_+, \;\text{i.e., $A-B$ is monotone.}
\end{equation}
Passing to resolvents, we have 
\begin{equation}
\label{e:passing}
B \preceql A
\;\;\Leftrightarrow\;\;
\Id+B \preceql \Id+A
\;\;\Leftrightarrow\;\;
(\Id+A)^{-1} \preceql (\Id+B)^{-1}
\;\;\Leftrightarrow\;\;
\jx{A} \preceql \jx{B}.
\end{equation}
The question now arises whether the Loewner order for
resolvents is compatible with our order from
Definition~\ref{d:ourorder}.
Clearly, 
\begin{equation}
\jx{A}\preceq\jx{B}
\;\;\Rightarrow\;\;
\text{$\jx{B}-\jx{A}$ is monotone}
\;\;\Leftrightarrow\;\;
\jx{A} \preceql \jx{B}.
\end{equation}
Conversely, assume that 
$\jx{A} \preceql \jx{B}$, i.e.,
$\jx{B}-\jx{A}$ is monotone. 
On the other hand,
$\jx{B}-\jx{A}$ is nonexpansive
by Lemma~\ref{l:160618a}. 
Altogether, by Fact~\ref{f:BHlin},
$\jx{B}-\jx{A}$ is firmly nonexpansive, i.e.,
$\jx{A}\preceq\jx{B}$. 
In summary, 
\begin{equation}
\label{e:Lownersummary}
B\preceql A
\;\;\Leftrightarrow\;\;
\jx{A}\preceql\jx{B}
\;\;\Leftrightarrow\;\;
\jx{A} \preceq \jx{B},
\end{equation}
which shows that \emph{the Loewner order and our order are
compatible}.
(In passing, we note that the comments in this section 
have extensions to self-adjoint operators on Hilbert space.)

\section{A connection to Moreau's order}

\label{s:M}

In his seminal work \cite{Moreau}, 
Moreau introduced an order of $\Gx$ via
\begin{equation}
g\preceqm f
\quad:\Leftrightarrow\quad
(\exi h\in\Gx)\;\; f = g+h.
\end{equation}
In fact, using this notation, we can write
the equivalence of 
\ref{f:BHiii} and \ref{f:BHiv} in
Fact~\ref{f:BH},
which was first observed by Moreau \cite{Moreau},
more succinctly as
$f\preceqm\qq$
$\Leftrightarrow$
$\qq\preceqm f^*$.
To make the connection with our order, 
let us take $f$ and $g$ from $\Gx$. 
Using Corollary~\ref{c:magda} and 
Theorem~\ref{t:magda}, 
we  have the following equivalences:
\begin{subequations}
\label{e:Moreauequiv}
\begin{align}
\env{g}\preceqm\env{f}
&\Leftrightarrow
\env{f}-\env{g}\;\text{is convex}
\Leftrightarrow
\env{g^*}-\env{f^*}\;\text{is convex}\\
&\Leftrightarrow
\px{g}-\px{f}\;\text{is monotone}
\Leftrightarrow
\env{f}-\env{g}\in\Ex\\
&\Leftrightarrow
\px{g}-\px{f}\in\Pm
\Leftrightarrow
\px{f}\preceq\px{g}.
\end{align}
\end{subequations}
Hence, \emph{our order is compatible with Moreau's order} when
restricted to $\Ex$, the set of envelope functions on $X$.

\section{Ordering monotone operators and convex functions}

\label{s:no}

The classical bijection between 
the maximally monotone operators on $X$ 
and the firmly nonexpansive mappings on $X$,
introduced by Minty \cite{Minty} (see also \cite{EckBer}), is 
\begin{equation}
\label{e:EckBer}
\Mm \to \Fm\colon A \mapsto \jx{A} = (\Id+A)^{-1}.
\end{equation}
We thus define a new binary relation
on $\Mm$ via 
\begin{equation}
B \preceq A
\;\;
:\Leftrightarrow
\;\;
\jx{A}\preceq \jx{B}.
\end{equation}
For instance, we have 
$\nc{X}=0\preceq A \preceq \nc{\{0\}}$ by
Lemma~\ref{l:160618b}\ref{l:160618bii}  while 
Lemma~\ref{l:160618b}\ref{l:160618biv} gives a statement relating
to the resolvent average introduced in \cite{BBMW}. 
The results in Section~\ref{s:L} show that
when $X=\RR^n$ and we consider $\SR^n_+$,
which is a subset of $\Mm$,
then these notions are compatible (see \eqref{e:Lownersummary}). 

Furthermore, we can use this binary relation on $\Mm$ to 
define a new binary relation on $\Gx$ via 
\begin{equation}
g \preceq f
\;\;
:\Leftrightarrow
\;\;
\sd{g}\preceq \sd{f}.
\end{equation}
With these definitions and
using \eqref{e:Moreauequiv}, we have
the equivalences
\begin{equation}
g \preceq f
\;\Leftrightarrow\;
\sd{g}\preceq \sd{f}
\;\Leftrightarrow\;
\jx{\sd{f}}\preceq \jx{\sd{g}}
\;\Leftrightarrow\;
\px{f}\preceq \px{g}
\;\Leftrightarrow\;
\env{g}\preceqm\env{f} 
\end{equation}
which show that our binary relation on $\Gx$  plays along nicely with
Moreau's order. 

Let us present another example. Let $A$ and $B$ be in $\SR^n_+$ and
define the quadratic forms
\begin{equation}
(\forall x\in\RR^n)\quad 
\qq_A(x):=\thalb\scal{x}{Ax}
\;\text{and}\;
\qq_B(x):=\thalb\scal{x}{Bx}.
\end{equation}
Then $\nabla \qq_A = A$ and $\nabla \qq_B=B$.
Using \eqref{e:passing}, we see that 
\begin{subequations}
\begin{align}
\qq_B\leq \qq_A \;\text{(pointwise)}
&\Leftrightarrow
0\leq \qq_{A-B}
\Leftrightarrow
A-B\in\SR^n_+
\Leftrightarrow
B\preceql A
\Leftrightarrow
\jx{A}\preceq \jx{B}\\
&\Leftrightarrow
B\preceq A
\Leftrightarrow
\nabla \qq_B\preceq \nabla \qq_A
\Leftrightarrow
\qq_B\preceq \qq_A
\Leftrightarrow
\px{\qq_A}\preceq \px{\qq_B}\\
&\Leftrightarrow
\env{\qq_B}\preceqm \env{\qq_A}.
\end{align}
\end{subequations}
This nicely illustrates the connections between
the various orders considered in this paper.

\section{New partial orders}

\label{s:last}

In our final section, we introduce a quotient space construction
which remedies the lack of antisymmetry observed in 
Example~\ref{ex:noantisymm}.

We start with a simple but useful result.

\begin{lemma}
\label{l:simple}
Let $T\in\Fm$ be such that $-T\in\Fm$. 
Then $T$ is a constant mapping.
\end{lemma}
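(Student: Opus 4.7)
The plan is to apply the defining inequality of firm nonexpansiveness to both $T$ and $-T$ and then add them.

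Fix $x,y\in X$. Since $T\in\Fm$, the defining inequality gives
\begin{equation*}
\|Tx-Ty\|^2 \leq \scal{x-y}{Tx-Ty}.
\end{equation*}
Applying the same definition to $-T\in\Fm$ (so that $(-T)x-(-T)y = -(Tx-Ty)$) yields
\begin{equation*}
\|Tx-Ty\|^2 = \|-(Tx-Ty)\|^2 \leq \scal{x-y}{-(Tx-Ty)} = -\scal{x-y}{Tx-Ty}.
\end{equation*}

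Adding the two inequalities makes the right-hand sides cancel, giving $2\|Tx-Ty\|^2\leq 0$. Hence $Tx = Ty$, and since $x,y$ were arbitrary, $T$ is constant. There is really no obstacle here; the only mild point to flag is the trivial but key algebraic identity $\|-(Tx-Ty)\|^2 = \|Tx-Ty\|^2$ that makes the two left-hand sides agree so the cancellation on the right produces the conclusion.
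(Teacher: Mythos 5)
Your proof is correct and follows essentially the same route as the paper: both apply the firm nonexpansiveness inequality to $T$ and $-T$ and combine the two bounds (you add them, the paper takes the minimum, which amounts to the same cancellation) to force $\|Tx-Ty\|^2\leq 0$.
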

\begin{proof}
Take $x$ and $y$ from $X$.
Then 
\begin{equation}
\normsq{Tx-Ty}\leq\scal{Tx-Ty}{x-y}
\;\text{and}\;
\normsq{(-T)x-(-T)y}\leq-\scal{Tx-Ty}{x-y}.
\end{equation}
Thus, 
\begin{subequations}
\begin{align}
0&\leq \normsq{Tx-Ty}\\
&\leq
\min\big\{\scal{Tx-Ty}{x-y},-\scal{Tx-Ty}{x-y}\big\}\\
&= -|\scal{Tx-Ty}{x-y}|\\
&\leq 0.
\end{align}
\end{subequations}
Therefore, $Tx-Ty=0$.
\end{proof}

We now define a binary relation on $\Fm$ via
\begin{equation}
\label{e:simfm}
T_1\sim T_2
\;:\Leftrightarrow\;
T_2-T_1 \;\text{is a constant mapping.}
\end{equation}
It is straightforward to check that 
$(\Fm,\sim)$ is an \emph{equivalence relation}. 
Denote the corresponding \emph{quotient set} by
\begin{equation}
[\Fm] := \Fm/{\negthinspace\sim}.
\end{equation}
Now define a binary relation on $[\Fm]$ by 
\begin{equation}
[T_1]\preceq [T_2] 
\;\; :\Leftrightarrow \;\;
T_1\preceq T_2. 
\end{equation}
Then $([\Fm],\preceq)$ is \emph{reflexive};
moreover, by Lemma~\ref{l:simple},
$([\Fm],\preceq)$ is \emph{antisymmetric}. 
If we restrict to proximal mappings,
then 
$([\Pm],\preceq)$ is also \emph{transitive}
by Theorem~\ref{t:proxtransi}. 
In summary, 
\begin{equation}
\label{e:super}
\text{$\big(\,[\Pm],\preceq\big)$ \emph{is a partially ordered
set}.}
\end{equation}

Finally, let us investigate \eqref{e:simfm}
from the view point of maximally monotone operators via
\eqref{e:EckBer}. 

\begin{lemma}
\label{l:quotient}
Let $A$ and $B$ be in $\Mm$ and let $c\in X$.
Then the following are equivalent:
\begin{enumerate}
\item
\label{l:quotienti}
$(\forall x\in X)$
$\jx{B}x = c + \jx{A}x$. 
\item 
\label{l:quotientii}
$(\forall x\in X)$ $Bx=-c+A(x-c)$.
\item 
\label{l:quotientiii}
$\gr B = (c,-c)+\gr A$. 
\end{enumerate}
\end{lemma}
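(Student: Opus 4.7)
The plan is to first dispatch the purely formal equivalence \ref{l:quotientii}$\Leftrightarrow$\ref{l:quotientiii} and then establish \ref{l:quotienti}$\Leftrightarrow$\ref{l:quotientiii}, with the only non-routine step being a maximal monotonicity argument at the very end. For \ref{l:quotientii}$\Leftrightarrow$\ref{l:quotientiii}, one simply rewrites: $(u,v)\in(c,-c)+\gr A$ iff $(u-c,v+c)\in\gr A$ iff $v+c\in A(u-c)$ iff $v\in -c+A(u-c)$, and the last condition is precisely the relation $v\in Bu$ appearing in \ref{l:quotientii}.

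For \ref{l:quotientiii}$\Rightarrow$\ref{l:quotienti}, I would fix $x\in X$ and set $y:=\jx{A}x$, so that $(y,x-y)\in\gr A$ by the defining relation of the resolvent. Then \ref{l:quotientiii} gives $(y+c,x-y-c)\in\gr B$, i.e., $x\in(y+c)+B(y+c)$, and single-valuedness of $\jx{B}$ (Minty) forces $\jx{B}x=y+c=c+\jx{A}x$.

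For the reverse direction \ref{l:quotienti}$\Rightarrow$\ref{l:quotientiii}, I would use Minty's theorem in the form that $x\mapsto(\jx{A}x,\,x-\jx{A}x)$ is a bijection from $X$ onto $\gr A$. Given \ref{l:quotienti}, as $x$ ranges over $X$ the pairs $(y,v):=(\jx{A}x,\,x-\jx{A}x)$ trace out all of $\gr A$, while simultaneously $(\jx{B}x,\,x-\jx{B}x)=(y+c,\,v-c)\in\gr B$. This yields the inclusion $\gr B\supseteq(c,-c)+\gr A$.

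The sole real obstacle is promoting this inclusion to equality. To handle it, I define $B'\colon x\mapsto -c+A(x-c)$, so that $\gr B'=(c,-c)+\gr A$; a direct check shows that the affine shift $(a,b)\mapsto(a+c,b-c)$ preserves both monotonicity and maximality, so $B'\in\Mm$. Since $B\in\Mm$ as well and $\gr B\supseteq\gr B'$, maximality of $B'$ forces $\gr B=\gr B'$, which is exactly \ref{l:quotientiii}. Everything else is routine manipulation of the resolvent identity $x\in y+Ay\Leftrightarrow y=\jx{A}x$.
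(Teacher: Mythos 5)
Your proof is correct, and its backbone is the same as the paper's: the Minty parametrization $x\mapsto(\jx{A}x,\,x-\jx{A}x)$ of the graph of a maximally monotone operator. The difference lies in how the graph \emph{equality} in \ref{l:quotientiii} is obtained. The paper applies the parametrization to \emph{both} operators at once: from \ref{l:quotienti} one has $(\jx{B}x,\,x-\jx{B}x)=(c,-c)+(\jx{A}x,\,x-\jx{A}x)$ for every $x$, and since the left-hand side sweeps out all of $\gr B$ while the right-hand side sweeps out all of $(c,-c)+\gr A$, both inclusions drop out simultaneously; the remaining implications are handled ``similarly''. You instead use the parametrization of $\gr A$ only, which yields $(c,-c)+\gr A\subseteq\gr B$, and then close the gap with the auxiliary operator $B'\colon x\mapsto -c+A(x-c)$: the shift $(a,b)\mapsto(a+c,b-c)$ preserves (maximal) monotonicity, so $B'\in\Mm$, and maximality of $B'$ together with monotonicity of $B$ forces $\gr B=\gr B'=(c,-c)+\gr A$. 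This detour is sound but avoidable --- surjectivity of $x\mapsto(\jx{B}x,\,x-\jx{B}x)$ onto $\gr B$ (i.e., $\ran(\Id+B)=X$) already gives the reverse inclusion directly: for $(u,v)\in\gr B$ take $x:=u+v$, so $u=\jx{B}x=c+\jx{A}x$ and $v=x-u=(x-\jx{A}x)-c$. On the other hand, your argument makes explicit exactly where maximality (as opposed to mere monotonicity) of $B$ enters, and your treatments of \ref{l:quotientii}$\Leftrightarrow$\ref{l:quotientiii} (pointwise set identity versus graph translation) and of \ref{l:quotientiii}$\Rightarrow$\ref{l:quotienti} (single-valuedness and full domain of $\jx{B}$) are exactly the routine verifications the paper leaves to the reader.
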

\begin{proof}
``\ref{l:quotienti}$\Rightarrow$\ref{l:quotientiii}'':
By assumption, $(\forall x\in X)$
$(\jx{B}x,x-\jx{B}x) = (c,-c)+(\jx{A}x,x-\jx{A}x)$.
Using the Minty parametrization (see \cite[(23.18)]{BC2011}) of the graph,
we see that $\gr B = (c,-c)+\gr A$.
The other implications are proved similarly.
\end{proof}

In view of Lemma~\ref{l:quotient},
the equivalence relation \eqref{e:simfm}
in $\Fm$ gives rise to the following 
equivalence relation on $\Mm$:
\begin{equation}
\label{e:simmm}
A\sim B 
\quad:\Leftrightarrow\quad
(\exi c\in X)(\forall x\in X)\;
Bx = -c+A(x-c).
\end{equation}
In turn, we can ``integrate'' \eqref{e:simmm}
to obtain the following equivalence relation on $\Gx$:
\begin{equation}
\label{e:simf}
f\sim g 
\quad:\Leftrightarrow\quad
(\exi c\in X)(\exi \gamma\in\RR)(\forall x\in X)\;
g(x) = f(x-c) - \scal{c}{x}+\gamma.
\end{equation}
The last equivalence relation induces a quotient set
$[\Gx] := \Gx/{\negthinspace\sim}$.
Interpreting \eqref{e:super} in this setting, 
we obtain the following result.
\begin{theorem}
\label{t:shawn}
Equip the quotient set $[\Gx]$ with the binary relation
\begin{equation}
[g] \preceq [f]
\;:\Leftrightarrow\;
[\px{f}]\preceq[\px{g}]
\;\Leftrightarrow\;
\px{f}\preceq \px{g}.
\end{equation}
Then 
\begin{equation}
\text{$\big(\,[\Gx],\preceq\big)$ \emph{is a partially ordered
set}.}
\end{equation}
\end{theorem}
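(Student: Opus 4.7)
The plan is to unpack the statement into (a) well-definedness of the binary relation on $[\Gx]$, and (b) the three axioms of a partial order, namely reflexivity, antisymmetry, and transitivity. Reflexivity is immediate from Lemma~\ref{l:160618b}\ref{l:160618bi} applied to $\px{f}$, and transitivity is exactly Theorem~\ref{t:proxtransi} lifted to classes. The substantive work is to connect the equivalence relation \eqref{e:simf} on $\Gx$ with the equivalence relation \eqref{e:simfm} on $\Fm$ via the map $f \mapsto \px{f}$, and then to use Lemma~\ref{l:simple} to obtain antisymmetry.

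First I would establish the following correspondence, which both secures well-definedness and powers antisymmetry: for $f,g\in\Gx$, one has $f\sim g$ in the sense of \eqref{e:simf} if and only if $\px{f}\sim\px{g}$ in the sense of \eqref{e:simfm}, i.e., $\px{g}-\px{f}$ is a constant mapping. The forward direction is a direct change-of-variables computation in the definition of $\px{g}$: writing $g(x)=f(x-c)-\scal{c}{x}+\gamma$ and substituting $y=z+c$ in $\argmin_y\{g(y)+\tfrac{1}{2}\|x-y\|^2\}$ produces, after collecting the terms that are constant in $z$, the expression $c+\argmin_z\{f(z)+\tfrac{1}{2}\|z-x\|^2\}=c+\px{f}(x)$. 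For the converse, suppose $\px{g}=c+\px{f}$ for some $c\in X$, and set $\tilde g(x):=f(x-c)-\scal{c}{x}$. The forward computation yields $\px{\tilde g}=c+\px{f}=\px{g}$, and since two functions in $\Gx$ with identical proximal mappings differ by an additive constant, $g=\tilde g+\gamma$ for some $\gamma\in\RR$, so $f\sim g$.

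With that correspondence in hand, well-definedness of $[g]\preceq[f]$ is immediate: if $f\sim f'$ and $g\sim g'$, then $\px{f'}=\px{f}+c_1$ and $\px{g'}=\px{g}+c_2$ for constants, so $\px{g'}-\px{f'}=\px{g}-\px{f}+(c_2-c_1)$, which lies in $\Fm$ if and only if $\px{g}-\px{f}$ does. The second equivalence $[\px{f}]\preceq[\px{g}]\Leftrightarrow\px{f}\preceq\px{g}$ is literally the definition of the induced order on $[\Pm]$ from \eqref{e:super}. For antisymmetry, suppose $[g]\preceq[f]$ and $[f]\preceq[g]$. Then both $\px{g}-\px{f}$ and its negative $\px{f}-\px{g}$ are firmly nonexpansive, so Lemma~\ref{l:simple} forces $\px{g}-\px{f}$ to be a constant mapping; the correspondence above then yields $f\sim g$, i.e., $[f]=[g]$.

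Transitivity transfers from $\Gx$ to $[\Gx]$ at once: if $[h]\preceq[g]$ and $[g]\preceq[f]$, then $\px{f}\preceq\px{g}\preceq\px{h}$, so $\px{f}\preceq\px{h}$ by Theorem~\ref{t:proxtransi}, giving $[h]\preceq[f]$. The main obstacle is the correspondence step in the second paragraph — in particular the forward computation $\px{g}(x)=c+\px{f}(x)$, which requires careful bookkeeping of the inner-product and quadratic terms after the substitution $y=z+c$; everything else is a direct consequence of results already established.
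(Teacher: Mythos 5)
Your proposal is correct and follows essentially the route the paper intends: the paper obtains Theorem~\ref{t:shawn} by ``interpreting'' \eqref{e:super}, i.e., reflexivity from Lemma~\ref{l:160618b}\ref{l:160618bi}, antisymmetry from Lemma~\ref{l:simple}, transitivity from Theorem~\ref{t:proxtransi}, together with the correspondence between \eqref{e:simf} and \eqref{e:simfm}, which the paper gets from Lemma~\ref{l:quotient} applied to $\partial f$ and $\partial g$ plus integration. Your direct change-of-variables computation of $\px{g}=c+\px{f}$ (and the standard fact that equal proximal mappings force equality up to an additive constant) is just a hands-on verification of that same correspondence, so the two arguments coincide in substance.
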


\section*{Acknowledgments}
Sedi Bartz was supported by a postdoctoral fellowship of the Pacific
Institute for the Mathematical Sciences and by NSERC grants of Heinz
Bauschke and Xianfu Wang. Heinz Bauschke was partially supported
by the Canada Research Chair program and by the Natural Sciences
and Engineering Research Council of Canada. 
Xianfu Wang was partially supported by the Natural
Sciences and Engineering Research Council of Canada.

\footnotesize

\end{document}